\documentclass[a4paper,12pt]{article}

\usepackage{pstricks}
\usepackage{graphicx,psfrag}

\usepackage{amsmath}
\usepackage{amssymb}
\usepackage{amsthm}
\usepackage{color}

\usepackage{bm}

\usepackage[dvipsnames]{xcolor}
\usepackage{hyperref}

\hypersetup{
	colorlinks=true,       % false: boxed links; true: colored links
	linkcolor=RoyalBlue,   % color of internal links (sections, etc.)
	citecolor=PineGreen,   % color of links to bibliography
	filecolor=magenta,     % color of file links
	urlcolor=Cyan,         % color of external links
	pdfpagemode=UseNone    % hides bookmarks panel on open
}

\usepackage[top=2cm,bottom=2cm,left=2cm,right=2cm]{geometry}

\DeclareMathOperator{\const}{const}
\DeclareMathOperator{\conv}{conv}
\DeclareMathOperator{\extr}{extr}

\newcommand{\eqdef}{\stackrel{\mathrm{def}}{=}}
\newcommand {\R}{{\mathbb R}}

\newcommand {\FF}{F}
\newcommand {\FFF}{\mathcal{F}}

\newcommand {\PP}{P}
\newcommand {\PPP}{\mathcal{P}}

\newcommand {\KKK}{M}
\newcommand {\SSS}{\mathcal{S}^1}

\newcommand {\nneg}{\bm{\neg}}

  \newtheorem{theorem}{Theorem}
  \newtheorem{lemma}{Lemma}

\theoremstyle{remark}
  \newtheorem{remark}{Remark}

\theoremstyle{definition}
  \newtheorem{definition}{Definition}
  
  \newtheorem{corollary}{Corollary}

\title{Isoperimetric problem for quasi-Finsler metrics and shapes of least aerodynamic resistance}

\author{Lev Lokutsievskiy\thanks{Steklov Mathematical Institute of RAS and HSE University.} \and Alexander Plakhov\thanks{Center for R{\&}D in Mathematics and Applications, Department of Mathematics, University of Aveiro, Portugal and Institute for Information Transmission Problems, Moscow, plakhov@ua.pt}}

\begin{document}
\maketitle

\begin{abstract}
We consider the following model in the framework of Newtonian aerodynamics: a 2D convex body moves forward and slowly oscillates in a rarefied medium on the plane. The law of oscillations is given. The problem is to find a body of fixed area that has the smallest resistance. We solve this problem by reducing it to an isoperimetric problem for quasi-Finsler metrics. Further, we introduce two criteria of smallness of oscillations. We show that the optimal body has singularity at the front (back) part of its boundary iff the former (latter) criterion is satisfied. The rest of the boundary is smooth. Finally, we find exact optimal shapes in the case of uniform oscillations and construct several shapes explicitly.
\end{abstract}

\begin{quote}
{\small {\bf Mathematics subject classifications:} 49Q10, 52A40, 37N05}
\end{quote}

\begin{quote}
{\small {\bf Key words and phrases:}
Isoperimetric problem, Newton's problem of least resistance, shape optimization, oscillations}
\end{quote}

\section{Introduction}\label{sec1}

{\bf 1.1.} \ The classical isoperimetric problem seeks a domain of a given volume that minimizes its surface area. When the ambient space is equipped with a Finsler or quasi-Finsler metric, the optimal domain is not a ball, and its shape depends on the chosen metric.
Such optimization problems arise naturally in physics and the calculus of variations, with the most famous example being Newton's problem of minimal aerodynamic resistance.

Originally formulated by Isaac Newton in 1687, the aerodynamic problem seeks the optimal shape of a body moving through a rarefied medium that minimizes the drag force. Mathematically, the resistance is expressed as a surface integral of a function depending on the local normal vector. When an area or volume constraint is imposed on the body, the search for the shape of least resistance translates directly into an isoperimetric problem in a quasi-Finsler space. In this paper, we investigate this quasi-Finsler isoperimetric problem to find the planar shapes of least aerodynamic resistance for bodies undergoing complex combinations of translational and rotational motions.
\vspace{2mm}

{\bf 1.2.} \ Isaac Newton in \cite{N} considered the problem of least resistance of moving bodies in rarefied media. He assumed that a convex body performs purely translational motion, without rotation, and that the reflections of point particles of the medium from the body are perfectly elastic. The particles are originally at rest. Under these assumptions, increasing the length of the body and decreasing its width favor a decrease in resistance. Thus, by elongating the body along the direction of motion and reducing its cross section, one can make the resistance of, say, a body of fixed volume arbitrarily small. To prevent this degenerate minimum, Newton imposed a restriction on the body's length and assumed that its orthogonal cross section is a circle of fixed size.

Newton originally considered axisymmetric bodies. Starting from the early 1990s, many works have extended the least resistance problem to various classes of bodies without the symmetry assumption, as well as to classes of nonconvex bodies (see, e.g., \cite{BK,BrFK,BFK,CL1,Guasoni,Kaw,LO,LP1,DAN2003,RMS2009,SIC,SIREV,P-sing,P-boundary,W,LZ,LWZ}).

A realistic physical setting implies that a body, along with its translation, also undergoes rotational motions. In this paper, we consider a convex $2$-dimensional body on the plane that performs periodic rotational motions alongside its translation. It is assumed that rotation is very slow compared to translational motion.\footnote{More precisely, the product of the angular velocity and the size of the body at any time instant is much smaller than the translational velocity, so that particles are reflected from the body as if there were no rotation.} We are interested in minimizing the time-averaged aerodynamic resistance.

At each time instant, the body is turned by an angle $\psi$ with respect to a selected reference position, which we consider to be the position of equilibrium. We assume that the probability density distribution $f$ of the rotation angles is known. Specifically, the fraction of time the body spends rotated by an angle in the infinitesimal interval $[\psi, \psi+d\psi]$ is equal to $f(\psi) d\psi$, where
$$\int_{-\pi}^{\pi} f(\psi)\, d\psi = 1.$$

Consider a reference frame $xOy$ attached to the body $B$. In this frame, as time varies, one observes incident medium flows under different angles. We aim to calculate the component of the force (along the direction of translational motion) applied to an infinitesimal boundary segment of length $ds = \sqrt{dx^2 + dy^2}$ with the outward normal $e_\theta \eqdef (\cos\theta,\sin\theta)$ for $\theta \in [-\pi, \pi].$

Suppose that at a given moment the body is turned by the angle $\psi$ with respect to the equilibrium position. Then the angle of incidence of the flow relative to the normal at the chosen point is $\varphi = \psi + \theta$; see Fig.~\ref{fig:body}.

\begin{figure}[ht]
	\begin{center}
		\includegraphics[width=0.5\textwidth]{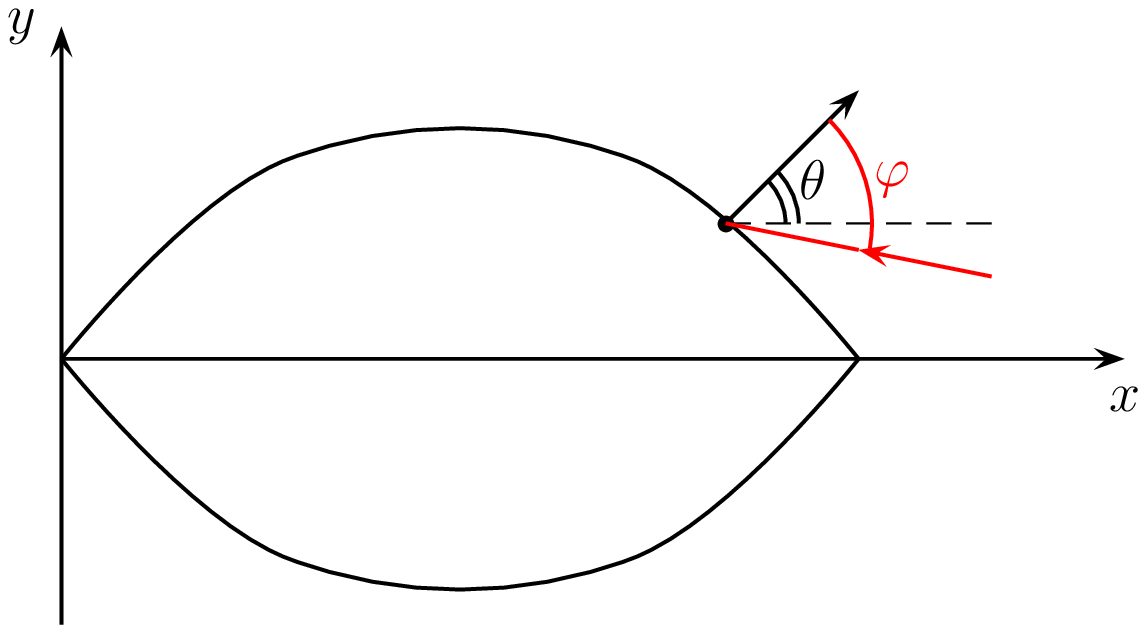}
		\caption{Oscillating body.}
		\label{fig:body}
	\end{center}
\end{figure}

The pressure of the flow on the boundary segment equals $2\rho v^2 \cdot \cos_+^3\varphi$, where $z_+ = \max \{ z, 0\}$ denotes the positive part of a real number $z$, $\rho$ is the density of the medium, and $v$ is the velocity of translation. For simplicity, we choose physical units such that $2\rho v^2 = 1$.

Since the density of the distribution characterizing the angle $\psi$ is $f$, we conclude that the time-averaged force acting on the chosen infinitesimal segment is given by
\begin{equation}\label{P}
\PPP(\theta)\, = \, \int_{-\pi}^{\pi} \cos_+^3(\psi + \theta)\, f(\psi)\, d\psi\, =\, \int_{-\pi/2}^{\pi/2} \cos^3 \varphi\, f(\varphi-\theta)\, d\varphi.
\end{equation}
\vspace{2mm}

{\bf 1.3.} \
Consider the particular case when the body performs uniform angular oscillations of amplitude $0< a \le \pi$ about its equilibrium position. By uniform, we mean that the rotational motion alternates between counterclockwise and clockwise rotation with a constant, infinitesimally small angular velocity. The extreme case $a=\pi$ corresponds to full uniform rotation with small angular velocity. Under these assumptions, the angular probability density is a uniform distribution:
$$
f(\theta) = \frac{1}{2a}\, \chi_{[-a,a]}(\theta),
$$
where $\chi$ denotes the indicator function. Consequently, the time-averaged force on a boundary segment with normal angle $\theta$ becomes
\begin{equation}\label{Punif}
\PPP(\theta)\, =\, \frac{1}{2a}\,\int_{\theta-a}^{\theta+a} \cos_+^3\varphi\, d\varphi.
\end{equation}
Note that by the symmetry of the uniform distribution, the resulting pressure function $\PPP$ is even.

\section{Statement of the problem}

{\bf 2.1.} \ Consider the Euclidean plane with the orthogonal coordinates $x,\, y$. A compact convex set in $\R^2$ is called a {\it convex body}, or just a {\it body}. Let $\PPP$ be a nonnegative continuous function on $\SSS \eqdef \mathbb{R}/2\pi\mathbb{Z}$ and $A > 0$. We state the following problem:
\begin{equation}\label{F}
	\text{Minimize the functional} \qquad \FFF(B) \ \eqdef\ \int_{\partial B} \PPP({\theta_\xi})\, d\xi
\end{equation}
in the class of convex bodies $B$ of area $A$. Here  $(\cos\theta_\xi,\sin\theta_\xi) =: n_\xi$ is the outward normal to $B$ at the regular point $\xi \in \partial B$, and $d\xi$ is the element of length. Note that the set of regular points has full measure in\footnote{The proof is completely similar to the proof of the very well known fact: any monotone functions may have only countably many jumps.} $\partial B$, and therefore, the integral in \eqref{F} is well defined.

This functional has a clear mechanical meaning: $\PPP$ is the time-averaged pressure exerted on an element of the body's boundary inclined at the corresponding angle, and $\FFF(B)$ is the time-averaged resistance of the body $B$. That is, one needs to minimize the average resistance of the body with area $A.$

Problem \eqref{F} is scale invariant: if $B$ solves the problem for bodies of area $A$, then $\lambda B$ solves the problem for bodies of area $\lambda^2 A$. Hence the problem can be stated equivalently as follows,
\begin{equation}\label{ProblEquiv}
\dfrac{\FFF(B)}{\sqrt{|B|}}\, \to\, \inf.
\end{equation}
Here and in what follows, $|B|$ means the area of $B$. With a slight abuse of notation, $|\cdot|$ will also mean the linear Lebesgue measure on $\partial B$. It will always be clear from the context which measure is meant.
\vspace{2mm}

{\bf 2.2.} \ Later on in this article we will need another expression for the functional, which is based on the notion of surface area measure. First we give several definitions.

\begin{definition}
A Borel measure $\nu$ on $S^1$ is called {\it zero-mean measure}, if it satisfies the equality
$$\int_{S^1} n\, d\nu(n) = 0.$$
\end{definition}

\begin{definition}
The surface area measure $\nu_B$ of a body $B$ is the Borel measure on $S^1$ defined by
$$
\nu_B(\Sigma) \eqdef |\{ \xi \in \partial B : n_\xi \in \Sigma \}|
$$
for any Borel set $\Sigma \subset S^1$.
\end{definition}

According to Alexandrov's Theorem, the map $B \mapsto \nu_B$ is a one-to-one correspondence between the set of convex bodies (defined up to translations) and the set of zero-mean measures (see \cite{Aleksandrov}).\footnote{Note that  in arbitrary dimension greater than or equal to 2, the surface area measure of a convex body is well defined and Alexandrov's Theorem (with some amendments) holds true.}

The least resistance problem~\eqref{F} can be equivalently expressed in terms of the surface area measure $\nu_B$ as follows. Denote
\begin{equation}\label{Fmeas}
\FF(\nu) = \int_{S^1} \PP(n)\, d\nu(n),
\end{equation}
where $\PP(e_\theta) \eqdef \PPP(\theta)$, $S^1$ is as usual the unit circumference on the plane,
and let $\nu \mapsto B_\nu$ be the map inverse to $B \mapsto \nu_B$.Then problem~\eqref{F} is as follows,
\begin{equation}\label{nu}
\text{Minimize} \quad \FF(\nu) \quad \text{in the class of zero-mean measures $\nu$ satisfying} \ \, |B_\nu| = A.
\end{equation}

%In what follows, we will use both notations $\PP,\FF$ and $\PPP,\FFF$.

\bigskip

{\bf 2.3.}
Our main goal is to find the exact optimal shape. To that end, we first solve an isoperimetric problem for a quasi-Finsler metric, which is then used to solve the problem of least resistance. Namely, we consider the problem
\begin{equation}\label{Pfinsler}
    \text{Minimize}\quad
    \int_0^1\mu_{\Omega}(\dot y,-\dot x)\,dt
    \quad\text{s.t.}\quad
    \frac12\int_0^1(x\dot y-y\dot x)\,dt=A
\end{equation}
where $\mu_\Omega$ is the Minkowski function of the set $\Omega$ given by
\begin{equation}
	\label{eq: Omega}
\Omega\ \eqdef\ \Big\{(u,v)\,\Big|\, \sqrt{u^2+v^2}\,\PPP(\operatorname{Arg}(u+iv)) \le 1 \Big\}.
\hspace{32mm}
\end{equation}
This isoperimetric problem turns out to be very useful for our context. Specifically, we prove that the optimal bodies in both problems \eqref{F} and \eqref{Pfinsler} coincide and are similar to the polar set of the convex hull of $\Omega$ where the similarity ratio is proportional to the square root of the area.
\vspace{2mm}

\textbf{2.4.}\  Let us state here the objects and notions that will be needed later on.
\vspace{2mm}

$\bullet$ \ $S^1$ is the unit circumference in $\mathbb{R}^2$, and $\SSS=\mathbb{R}/2\pi\mathbb{Z}$.
\vspace{2mm}

$\bullet$ \ $B$ is a convex body moving and oscillating on the plane, and $\nu_B$ is its surface area measure.
\vspace{2mm}

$\bullet$ \ $f : \SSS \to \mathbb{R}$ determines the type of oscillations.
\vspace{2mm}

$\bullet$ \ $e_\theta =(\cos\theta,\sin\theta)$ and $e_\theta^\perp = (-\sin\theta,\cos\theta)$.
\vspace{2mm}

$\bullet$ \ $\PPP : \SSS \to \mathbb{R}$ determines the time-averaged pressure on the body's boundary, and

$\PP(e_\theta) = \PPP(\theta).$
\vspace{2mm}

The plan of the paper is the following. In the next Section \ref{sec: convex} the isoperimetric problem is solved for the case when $\Omega$ is convex. In Section \ref{sec: non convex} the solution is obtained for the case of arbitrary (generally nonconvex) $\Omega$. Finally, in Section \ref{sec application} the obtained result is applied to the minimal resistance problem. It is shown that the optimal shape has no more than two singularities, one at the top point of the boundary, and the other on the back point, and necessary and sufficient conditions for the presence of these singularities are obtained. Additionally, the problem of uniform oscillations is solved exactly, and several optimal shapes are constructed explicitly.

\section{Solution of the problem for convex \texorpdfstring{$\Omega$}{Ω}}\label{sec: convex}

Here we assume that $\Omega$ is a convex set and provide a complete answer to the problem in this case. %In the section~\ref{sec: non convex} we consider the general non-convex case and show that the solution is easily obtained by the convexification $\Omega\longrightarrow\conv\Omega$

Let $\theta \in \SSS = \mathbb{R} / 2\pi\mathbb{Z}$ and $\PPP: \SSS \to [0;+\infty)$ be a continuous function. Consider the following problem: find a closed Lipschitz curve $\gamma$ on the plane $\mathbb{R}^2$ that encloses a given area $A > 0$ and has the minimum possible integral
\begin{equation}
	\label{problem: main}
	\int\limits_0^{s_1} \PPP(\theta(s))\,ds\to\min,
\end{equation}
where $s$ is the arc length parameter on $\gamma$ (that is, $s_1$ is the length of the curve, and if $\gamma = (x(s), y(s))$, then $x'(s)^2 + y'(s)^2 = 1$ for almost all $s \in [0, s_1]$), and $\theta(s)$ is the angle of the outward normal to $\gamma$ at the point $x(s), y(s)$, i.e., $(x'(s), y'(s)) \perp e_{\theta(s)} = (\cos\theta(s), \sin\theta(s))$. If, for convenience, we assume that the arc length parameter traverses $\gamma$ counterclockwise, then
$$
x'(s) = -\sin \theta(s), \qquad y'(s) = \cos \theta(s).
$$

\begin{remark}
	
Note that we do not impose a convexity requirement on the curve, but merely require that it be rectifiable and closed. We interpret the area constraint via Green's formula: $\frac12\int_0^{s_1}(xy'-x'y)\,ds=A$. Note that we have significantly expanded the class of admissible curves by including in our consideration both nonconvex curves and curves with self-intersections. Thus, Problem \eqref{problem: main} seeks the minimum of the functional in a more general domain as compared with Problem \eqref{F}. Nevertheless, we will prove that the optimal solution to problem~\eqref{problem: main} in this expanded class is ultimately a convex curve without self-intersections; that is, the minima in both problems coincide.
	
\end{remark}

To solve isoperimetric problem~\eqref{problem: main}, we formulate it as an optimal control problem on the curve $(x(t), y(t))$. We will use the function $\operatorname{Arg}(x+iy):\mathbb{C}\setminus 0 \to \SSS$, which takes the form $\operatorname{Arg}(R(\cos\theta+i\sin\theta))=\theta\in \SSS$ for any $R>0$ and $\theta\in \SSS$:
$$
\begin{gathered}
	\int\limits_0^{s_1} \PPP(\operatorname{Arg}(y'-ix'))\,ds \to \min \\
	\frac{1}{2} \int\limits_0^{s_1} (xy' - x'y)\,ds = A \\
	x'^2 + y'^2 = 1 \\
	x(0) = y(0) = x(s_1) = y(s_1) = 0.
\end{gathered}
$$

The duration of motion $s_1$ is equal to the Euclidean length of the curve and is not fixed. Note that in this equation, the isoperimetric condition on the area does not depend on the parametrization of the curve. The minimized functional can also be freed from the requirement of an arc length parameter $\dot{x}^2+\dot{y}^2=1$, which is rather inconvenient. Denote
$$
Q(u,v) \eqdef \mu_\Omega(v,-u) = \sqrt{u^2+v^2}\, \PPP(\operatorname{Arg}(v-iu)).
$$

Then if $(x(t), y(t))$ is some curve for $t \in [0, 1]$, and $s = s(t)$ is its arc length parameter (that is, $s(t) = \int_0^t \sqrt{\dot{x}^2+\dot{y}^2}\,d\tau$), then
$$
\int\limits_0^1 Q(\dot{x}, \dot{y})\,dt =
\int\limits_0^1 \sqrt{\dot{x}^2+\dot{y}^2}\,\PPP(\operatorname{Arg}(\dot{y}-i\dot{x}))\,dt =
\int\limits_0^{s_1} \PPP(\operatorname{Arg}(y'-ix'))\,ds,
$$
since $ds = \sqrt{\dot{x}^2+\dot{y}^2}\,dt$ and $\operatorname{Arg}(\lambda (a+ib)) = \operatorname{Arg}(a+ib)$ for $\lambda > 0$.

Note that the function $Q$ is nonnegative and positively homogeneous. Therefore, it coincides with the Minkowski function of the following set $\Omega_1$, $Q=\mu_{\Omega_1}$:
\begin{equation*}
	%\label{eq: Omega}
\Omega_1 = e^{i\pi/2}\Omega = \Big\{(u, v)\,\Big|\, Q(u,v) \le 1\Big\}.
\end{equation*}
From the definition, it is obvious that the set $\Omega_1$ is a $90^\circ$ rotation of $\Omega$ and hence it is star-shaped, closed, and $0\in\operatorname{int}\Omega_1$.

Thus, problem~\eqref{problem: main} takes an equivalent form:
\begin{equation}
	\label{problem: length minimization}
	\begin{gathered}
		\int\limits_0^1 \mu_{\Omega_1}(u, v) dt \to \min \\
		\dot x=u\qquad \dot y=v\qquad \dot z=\frac12(xv-yu)\\
		x(0) = y(0) = x(1) = y(1) = z(0) = 0\qquad z(1)=A.
	\end{gathered}
\end{equation}
Here $u$ and $v$ are the controls. Note that problem~\eqref{problem: length minimization} is of independent interest for any star-shaped sets $\Omega_1$.

In the particular case when $\Omega_1$ is a compact convex set and $0\in\operatorname{int}\Omega_1$, this problems turns out to be an isoperimetric problem on a Fisler plane. In \cite{Busemann}, the isoperimetric problem on a Finsler (Minkowski) plane was solved for the first time. In \cite{Berestovskii}, it was reformulated as a sub-Finsler problem and solved using Pontryagin's Maximum Principle (PMP). Later, in \cite{Lokut1} and \cite{Lokut2}, this problem, along with a wide class of other sub-Finsler problems, was explicitly solved using convex trigonometry. In this paper, we deal with the situation when $\Omega_1$ is non-convex and non-compact in general.

We will proceed as follows. In this section, we fully analyze the case when the set $\Omega_1$ is convex: we prove the existence of an optimal solution in problem~\eqref{problem: length minimization} and find it using Pontryagin's maximum principle (PMP). Then in the next section we will show that if the set $\Omega_1$ is not convex, then by replacing the set $\Omega_1$ in the problem with $\tilde\Omega_1=\operatorname{conv}\Omega_1$, we obtain
$$
\Omega_1\subset {\tilde\Omega_1}
\,\implies\,
\mu_{\Omega_1} \ge \mu_{\tilde\Omega_1}.
$$
That is, in the new problem, the value of the functional on any curve is no greater than in the original one. An optimal solution exists in the new problem, and we will show that the values of the functionals actually coincide for it, i.e.\ optimal solutions to the problem with non-convex $\Omega_1$ exist and coniside with optimal solutions to the convexified problem.

Let us now write down the condition on $\PPP$ that determines the convexity of $\Omega_1.$

\begin{lemma}
	The set $\Omega_1$ in~\eqref{eq: Omega} is convex and closed if and only if  $\PPP + \PPP'' \ge 0$, where the second derivative is understood in the sense of distributions.
\end{lemma}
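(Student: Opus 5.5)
Since $\Omega_1$ is a rotation of $\Omega$, it suffices to treat $\Omega$. The plan is to write $\Omega$ in polar form and express convexity through the Minkowski function $\mu_\Omega(r e_\theta)=r\,\PPP(\theta)$. We set $h(\theta)=\PPP(\theta)$. Then $\Omega=\{re_\theta: r\,h(\theta)\le 1\}$ is star-shaped with respect to $0$, with radial function $\rho(\theta)=1/h(\theta)$ (possibly $+\infty$ where $h=0$). Closedness is automatic from continuity of $h$: $\Omega$ is the sublevel set $\{\mu_\Omega\le 1\}$ of the continuous function $(u,v)\mapsto\sqrt{u^2+v^2}\,\PPP(\operatorname{Arg}(u+iv))$, extended by $0$ at the origin, which is continuous because $\PPP$ is bounded. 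So the real content is convexity.

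For convexity we use the standard fact that a closed star-shaped set with $0$ in its interior is convex iff its gauge $\mu_\Omega$ is convex. For a positively homogeneous function this is equivalent to sublinearity, i.e.\ to $\mu_\Omega$ being a support function. So the claim becomes: the $1$-homogeneous extension $H(x)=|x|\,h(\operatorname{Arg}x)$ is convex on $\R^2$ iff $h+h''\ge 0$ in $\mathcal D'(\SSS)$. This is the classical criterion for support functions.

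\emph{Smooth case.} For $h\in C^2$ we compute the Hessian of $H$ in polar coordinates. Homogeneity forces the radial direction into the kernel, and the tangential eigenvalue equals $(h+h'')(\theta)/r$. Hence $H$ is convex iff $h+h''\ge 0$ pointwise.

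\emph{General continuous $h$.} We mollify on the circle: $h_\varepsilon=h*\eta_\varepsilon$. Since the operator $h\mapsto h+h''$ commutes with convolution on $\SSS$, the inequality $h+h''\ge0$ in $\mathcal D'$ implies $h_\varepsilon+h_\varepsilon''\ge0$ pointwise. So each $H_\varepsilon$ is convex, and $H_\varepsilon\to H$ locally uniformly, which gives $H$ convex. Conversely, suppose $H$ is convex. Its averages under rotation, $H_\varepsilon(x)=\int H(R_{-\phi}x)\eta_\varepsilon(\phi)\,d\phi$, are again convex and $1$-homogeneous, with angular profile $h_\varepsilon$. The smooth case then gives $h_\varepsilon+h_\varepsilon''\ge0$, and letting $\varepsilon\to0$ in $\mathcal D'$ yields $h+h''\ge0$.

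The step I expect to need the most care is the passage between convexity of $\Omega$ and convexity of $\mu_\Omega$ when $\PPP$ vanishes somewhere. In that case $\Omega$ is unbounded, $\mu_\Omega$ vanishes along rays, and one should check that the gauge characterization still holds. It does: convexity of $\Omega$ is equivalent to $\mu_\Omega(x+y)\le\mu_\Omega(x)+\mu_\Omega(y)$ by the usual argument using $x/(\mu_\Omega(x)+\delta)\in\Omega$. Here $0\in\operatorname{int}\Omega$ holds because $\PPP$ is bounded.
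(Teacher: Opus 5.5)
Your argument is correct. The paper gives no proof of this lemma. It calls the statement well known and refers to \cite{LP1}, p.~157, where it appears as the classical characterization of support functions of planar convex sets.

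What you propose is a self-contained proof of exactly that characterization. You reduce convexity of $\Omega$ to convexity of its gauge $H(re_\theta)=r\PPP(\theta)$; your $x/(\mu_\Omega(x)+\delta)$ argument needs neither boundedness of $\Omega$ nor positivity of $\PPP$. You then compute the Hessian $\frac{h+h''}{r}\,e_\theta\otimes e_\theta$ in the smooth case. Finally you pass to merely continuous $\PPP$ by mollifying on $\SSS$. In the converse direction, the rotation average of $H$ supplies a smooth convex approximant with angular profile $h*\eta_\varepsilon$.

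Compared with the bare citation, your route is longer but complete. It also makes transparent that closedness is automatic (as the paper remarks). It shows as well that zeros of $\PPP$, i.e.\ unbounded $\Omega_1$, require no separate treatment.

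One detail should be stated explicitly in the smooth case. A positive semidefinite Hessian on $\R^2\setminus\{0\}$ only gives convexity along segments avoiding the origin. For segments through $0$ you can do either of two things. You can perturb the endpoints and use continuity of $H$ at $0$. Or you can observe that on a line through $0$ the restriction is $t\mapsto t\,h(\theta)$ for $t\ge 0$ and $|t|\,h(\theta+\pi)$ for $t\le 0$, which is convex because $h\ge 0$.
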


This statement is well known; see, for instance, the paper \cite{LP1}, p. 157.

Thus, if the condition of the lemma is met, then the set $\Omega_1$ is convex (and, obviously, closed).
If $\PPP(\theta) \neq 0$ for all $\theta$, then the set of admissible controls $\Omega_1$ is compact.
The case when the set $\Omega_1$ is a convex compact set and $0 \in \operatorname{int} \Omega_1$ has been well studied. In this case, problem \eqref{problem: length minimization} is called an isoperimetric problem on the Finsler plane, see \cite{Lokut1}.

We show that regardless of the compactness of $\Omega_1$, its convexity guarantees the existence of an optimal solution in problem~\eqref{problem: length minimization}.

\begin{theorem}
	\label{thm: existence}
	Assume that the set $\Omega_1$ is convex and $0\in\operatorname{int}\Omega_1$. Then if $\Omega_1$ is not a half-plane or a strip between two parallel lines, then an optimal solution exists in problem~\eqref{problem: length minimization}.
\end{theorem}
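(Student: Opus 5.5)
The plan is to prove existence by the direct method of the calculus of variations. The difficulty is that $\mu_{\Omega_1}$ may vanish on a nontrivial cone, namely the recession cone of $\Omega_1$, when $\Omega_1$ is unbounded. So the functional is not obviously coercive with respect to Euclidean length. I will restore coercivity by subtracting a suitable exact linear term, which integrates to zero over any closed curve.

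\textbf{Step 1 (geometry of $\Omega_1$).} Since $\Omega_1$ is closed, convex and $0\in\operatorname{int}\Omega_1$, its Minkowski function is the support function of the polar set: $\mu_{\Omega_1}(w)=\max_{p\in\Omega_1^\circ}\langle p,w\rangle$. Here $\Omega_1^\circ$ is compact because $0\in\operatorname{int}\Omega_1$.

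A closed convex planar set containing a line is a strip, a half-plane, or the whole plane. The whole-plane case is trivial: $\mu\equiv0$ and every admissible curve is optimal. Otherwise, the hypothesis says $\Omega_1$ contains no line, so its recession cone $C$ is pointed. I will then check the standard duality fact that $\Omega_1^\circ$ has nonempty interior. Indeed, if $\Omega_1^\circ$ lay in a line $\ell$ through the origin, then $\Omega_1\supset(\Omega_1^\circ)^\circ\supset\ell^\perp$, so $\Omega_1$ would contain a line. Hence there are $p_0\in\Omega_1^\circ$ and $r>0$ with $p_0+rB\subset\Omega_1^\circ$, where $B$ is the closed unit disc.

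\textbf{Step 2 (coercivity).} Put $\nu(w)=\mu_{\Omega_1}(w)-\langle p_0,w\rangle$. This is the support function of $\Omega_1^\circ-p_0\supset rB$, so $\nu(w)\ge r|w|$. For any admissible closed curve, $\int_0^1\langle p_0,(\dot x,\dot y)\rangle\,dt=0$ because $x(0)=x(1)$ and $y(0)=y(1)$. Therefore
$$\int_0^1\mu_{\Omega_1}(u,v)\,dt=\int_0^1\nu(u,v)\,dt\ \ge\ r\int_0^1\sqrt{u^2+v^2}\,dt .$$
Thus the functional bounds the Euclidean length from above, up to the factor $r^{-1}$. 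This is the key step and the place where the exclusion of strips and half-planes enters. The infimum is finite, since a circle of area $A$ is admissible and $\mu_{\Omega_1}$ is finite everywhere.

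\textbf{Step 3 (compactness and lower semicontinuity).} Take a minimizing sequence. All lengths are bounded by $(\inf+1)/r$. Reparametrize each curve proportionally to Euclidean arc length; by positive homogeneity of $\mu_{\Omega_1}$ this does not change the functional. After a translation (allowed by translation invariance), the curves start at $0$ and are uniformly Lipschitz. By Arzel\`a--Ascoli a subsequence converges uniformly, and the derivatives $(u_n,v_n)$ converge weak-$*$ in $L^\infty$. The integrand $\mu_{\Omega_1}$ is convex, nonnegative and continuous, so $\int\mu_{\Omega_1}$ is weakly lower semicontinuous. The area $\frac12\int(xv-yu)\,dt$ is a product of a uniformly convergent factor and a weakly-$*$ convergent factor, so it passes to the limit and equals $A$. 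The limit curve is closed. Hence it is admissible and attains the infimum.

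I expect the main obstacle to be Step 1–2: checking carefully that ``not a half-plane or strip'' is exactly what yields an interior point of $\Omega_1^\circ$. The remaining steps are routine.
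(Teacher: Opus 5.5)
Your proposal is correct and follows essentially the same route as the paper: the direct method, with a length bound obtained from $\operatorname{int}\Omega_1^\circ\neq\emptyset$ and the closedness of the curve, followed by weak-$*$ compactness and Tonelli lower semicontinuity. The one difference is in the coercivity step. You subtract $\langle p_0,w\rangle$ for an interior point $p_0$ of $\Omega_1^\circ$ to get $\mu_{\Omega_1}(w)-\langle p_0,w\rangle\ge r|w|$ directly, with the explicit constant $1/r$. The paper's Lemma~\ref{lm: length is bounded} instead takes two non-parallel covectors in $\Omega_1^\circ$, bounds $\int|p_iu+q_iv|\,dt$ for each, and uses equivalence of norms, so yours is a more compact version of the same idea.
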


\begin{remark}
We will show that if $\Omega_1$ is a half-plane or a strip then the solution does not exist. Further, in this case the infimum of the functional is zero, while otherwise (if $\Omega_1 \ne \mathbb{R}^2$) it is positive (see Section~\ref{sec: non convex})
\end{remark}

\begin{remark}
A more general result, without the assumption of convexity of $\Omega_1$, in a slightly different context, is proved in Theorem \ref{t1} of Section \ref{sec: non convex}.
\end{remark}

\begin{lemma}
	\label{lm: length is bounded}
	Let $\Omega_1\subset\R^2$ be a convex set, $0\in\operatorname{int}\Omega_1$ and $\Omega_1$ contains no straight line. Then for any closed rectifiable curve $(x(t),y(t))$, $t\in[0;1]$, we have
	$$
	\operatorname{length}(x,y)=
	\int_0^1\sqrt{\dot x^2+\dot y^2}\le
	\const\,\int_0^1\mu_{\Omega_1}(\dot x,\dot y)\,dt.
	$$
\end{lemma}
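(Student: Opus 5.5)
Since $\Omega_1$ is convex with $0\in\operatorname{int}\Omega_1$, the Minkowski function $\mu_{\Omega_1}$ is convex, positively homogeneous and finite everywhere. It can be written as the support function of the polar set:
$$
\mu_{\Omega_1}(w)=\sup_{p\in\Omega_1^\circ}\langle p,w\rangle ,\qquad \Omega_1^\circ=\{p:\langle p,w\rangle\le 1\ \ \forall w\in\Omega_1\}.
$$
The polar $\Omega_1^\circ$ is compact, convex and contains $0$, though not necessarily in its interior. The plan is to use the closedness of the curve to replace $\mu_{\Omega_1}$ by a function that is bounded below by a multiple of the Euclidean norm. The key fact is that for a closed curve $\int_0^1\dot\xi\,dt=0$, so for any fixed vector $c$ we have
$$
\int_0^1\mu_{\Omega_1}(\dot\xi)\,dt=\int_0^1\big(\mu_{\Omega_1}(\dot\xi)-\langle c,\dot\xi\rangle\big)\,dt,
$$
where $\xi=(x,y)$ and $\mu_{\Omega_1}(w)-\langle c,w\rangle$ is the support function of the translated polar $\Omega_1^\circ-c$.

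First, I translate the hypothesis that $\Omega_1$ contains no line into a statement about the polar. The recession cone of the closed convex set $\Omega_1$ contains a line exactly when its lineality space is nontrivial. That lineality space is the orthogonal complement of the linear span of $\Omega_1^\circ$. Hence $\Omega_1$ contains no line if and only if $\Omega_1^\circ$ is not contained in any line through the origin, that is, $\Omega_1^\circ$ has nonempty interior. Closedness of $\Omega_1$ holds by the preceding Lemma; if it is not assumed, one passes to the closure, which does not change $\mu_{\Omega_1}$. So $\Omega_1^\circ$ is a two-dimensional convex body, and we may pick $c\in\operatorname{int}\Omega_1^\circ$ together with $r>0$ such that the closed disc $D_r(c)\subset\Omega_1^\circ$.

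Then for every $w\in\R^2$,
$$
\mu_{\Omega_1}(w)-\langle c,w\rangle=\sup_{p\in\Omega_1^\circ-c}\langle p,w\rangle\ \ge\ \sup_{|q|\le r}\langle q,w\rangle=r|w|.
$$
Integrating along the closed curve and using the identity above gives
$$
\int_0^1\mu_{\Omega_1}(\dot x,\dot y)\,dt=\int_0^1\big(\mu_{\Omega_1}(\dot\xi)-\langle c,\dot\xi\rangle\big)dt\ \ge\ r\int_0^1|\dot\xi|\,dt .
$$
This is the claim with $\const=1/r$, which depends only on $\Omega_1$. All integrals are finite for a Lipschitz curve, since $\mu_{\Omega_1}(w)\le C|w|$ follows from $0\in\operatorname{int}\Omega_1$.

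The main obstacle is the polar-duality step: showing rigorously that "no line in $\Omega_1$" is equivalent to "$\operatorname{int}\Omega_1^\circ\neq\emptyset$" when $\Omega_1$ is unbounded. I would argue directly. If $\Omega_1^\circ\subset\{p:\langle p,e\rangle=0\}$ for some $e\neq0$, then every $w\in\R e$ satisfies $\langle p,w\rangle=0\le1$ for all $p\in\Omega_1^\circ$. By the bipolar theorem $\Omega_1=\Omega_1^{\circ\circ}$, so the whole line $\R e$ lies in $\Omega_1$, a contradiction. Since $0\in\Omega_1^\circ$ and $\Omega_1^\circ$ is convex, not lying in any line through the origin means it contains two linearly independent vectors, hence a nondegenerate triangle with vertex $0$, and so it has interior points.
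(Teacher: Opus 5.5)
Your proof is correct and follows the paper's strategy: get $\operatorname{int}\Omega_1^\circ\neq\emptyset$ from the no-line hypothesis via the bipolar theorem, then use closedness of the curve to cancel a linear term in a lower bound for $\mu_{\Omega_1}$ by points of the polar. The only difference is the last step. The paper takes two non-parallel $(p_i,q_i)\in\Omega_1^\circ$, uses $\mu_{\Omega_1}\ge\max\{0,p_iu+q_iv\}$ to get $\int_0^1|p_iu+q_iv|\,dt\le 2M$, and then invokes equivalence of norms; you instead translate by an interior point $c$ of $\Omega_1^\circ$ and get the pointwise bound $\mu_{\Omega_1}(w)-\langle c,w\rangle\ge r|w|$, which is shorter and gives the explicit constant $1/r$.
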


\begin{proof}
	
	First, we show that the polar set $\Omega_1^\circ\subset\R^{2*}$ has a non-empty interior. Indeed, if $\operatorname{int}\Omega_1^\circ=\emptyset$, then the convex set $\Omega^\circ$ is one-dimensional or zero-dimensional, meaning that in any case it is contained in some straight line passing through the origin (since $(0,0)\in\Omega_1^\circ$). This implies that in this case the set\footnote{$\operatorname{cl}$ denotes closure of the set as usual.} $\Omega_1^{\circ\circ}=\operatorname{cl}\operatorname{conv}\Omega_1=\operatorname{cl}\Omega_1$ is the product of some non-empty one-dimensional convex set and a line orthogonal to $\R\Omega_1^\circ$, which contradicts the condition of the lemma. Thus, $\operatorname{int}\Omega_1^\circ\ne\emptyset$.
	
	Further, for any covector $(p,q)\in\Omega_1^\circ$ we have $\mu_{\Omega_1}(\xi,\eta)\ge p\xi+q\eta$ for all $(\xi,\eta)\in\R^2$. Since $(0,0)\in\Omega_1^\circ$, we get
	$$
	(p,q)\in\Omega_1^\circ
	\qquad\Longrightarrow\qquad
	\forall (\xi,\eta)\in\R^2\quad \mu_{\Omega_1}(\xi,\eta)\ge\max\{0,p\xi+q\eta\}.
	$$
	Denote $\dot x(t)=u(t)$ and $\dot y(t)=v(t)$. Then
	$$
	\int_0^1 \max\{0,pu+qv\}\,dt \le \int_0^1\mu_{\Omega_1}(u,v)\,dt \eqdef M.
	$$
	Due to the closedness of the curve $x(0)=x(1)$ and $y(0)=y(1)$, we have $\int_0^1 (pu+qv)\,dt=0$, that is
	$$
	\int_0^1 \max\{0,-pu-qv\}\,dt = -\int_0^1\min\{0,pu+qv\}\,dt =
	\int_0^1\max\{0,pu+qv\} \le M.
	$$
	Thus
	$$
	\int_0^1 |pu+qv|\,dt = \int_0^1 \big(\max\{0,pu+qv\} + \max\{0,-pu-qv\}\big) \,dt \le 2M.
	$$
	
	Next, since the polar set $\Omega_1^\circ$ has a non-empty interior, two non-parallel vectors can be found in it: $(p_i,q_i)\in\Omega_1^\circ$, $i=1,2$, $(p_1,q_1)\nparallel(p_2,q_2)$. For these vectors it holds that
	$$
	\int_0^1 \big(|p_1u+q_1v| + |p_2u+q_2v|\big)\,dt \le 4M.
	$$
	Since the vectors $(p_i,q_i)$ are not parallel to each other, the function $f(\xi,\eta)=|p_1\xi+q_1\eta| + |p_2\xi+q_2\eta|$ is a norm on $\R^2$. All norms on $\R^2$ are equivalent, hence
	$$
	\int_0^1\sqrt{u^2+v^2}\,dt \le \const \int_0^1 \big(|p_1u+q_1v| + |p_2u+q_2v|\big)\,dt \le 4\const M,
	$$
	as required.
\end{proof}

\begin{proof}[Proof of Theorem~\ref{thm: existence}]
	
	If $\Omega_1=\R^2$, then $\mu_{\Omega_1}\equiv0$ and any admissible curve is optimal. Therefore, without loss of generality, we assume that $\Omega_1\ne\R^2$.
	
	Since $0\in\operatorname{int}\Omega_1$, there exists an admissible curve in problem~\eqref{problem: length minimization} on which the functional value is finite. Indeed,
	$$
	0\in\operatorname{int}\Omega_1\quad \Longrightarrow\quad \forall (u,v)\ \mu_{\Omega_1}(u,v)<\infty,
	$$
	that is, on any admissible curve the functional value is finite. The existence of admissible curves is also obvious — for example, a circle on the plane bounding a disk of a given area $A>0$ will do.
	
	Let $u_n,v_n\in L_\infty(0;1)$ be a minimizing sequence of controls. Without loss of generality, $|u_n(t)|^2 + |v_n(t)|^2 \equiv l_n^2$, where $l_n$ is the Euclidean length of the curve $(x_n(t),y_n(t))$ on the plane. The sequence of lengths $l_n$ is bounded due to Lemma~\ref{lm: length is bounded}.

	Due to the boundedness of $l_n$, a weakly$^*$ converging subsequence in $L_\infty(0;1)$ can be selected from the sequence $(u_n,v_n)$. Without loss of generality, we keep the numbering for the subsequence: $(u_n,v_n)\stackrel{\ast}{\rightharpoonup}(u,v)$ in $L_\infty(0;1)$. Thus, for any $t$, the sequences $x_n(t)$ and $y_n(t)$ have limits $x(t)$ and $y(t)$, that is $x_n\to x$ and $y_n\to y$ pointwise. Moreover, $\dot x=u$ and $\dot y=v$. Indeed, for any $t$ due to weak$^*$ convergence $u_n\stackrel{\ast}{\rightharpoonup}u$ we have
	$$
	x(t) = \lim_{n\to\infty} x_n(t) = \lim_{n\to\infty}\int_0^t u_n(\tau)\,d\tau =
	\int_0^t u(\tau)\,d\tau.
	$$
	Therefore $\dot x=u$ and similarly $\dot y=v$. Further, due to the boundedness of $l_n$, for all $n$ the curves $x_n$ and $y_n$ are Lipschitz with the same Lipschitz constant $C=\sup_n l_n$. Consequently, pointwise convergence implies uniform convergence, $x_n\rightrightarrows x$. Whence we immediately obtain that the sequence $z_n$ has a pointwise limit. By similar reasoning $\dot z = \frac12(xv-yu)$.
	
	Thus, the curve $(x(t),y(t),z(t))$ satisfies the boundary conditions of problem~\eqref{problem: length minimization} and the differential equations. It remains to note that the functional in problem~\eqref{problem: length minimization} is lower semi-continuous with respect to weak$^*$ convergence by Tonelli’s Theorem on weak lower semicontinuity, since the function $\mu_{\Omega_1}$ is convex and closed (see \cite{RenardyRogers}).
	
\end{proof}

We will now find the optimal solution to problem \eqref{problem: main} via Pontryagin's maximum principle.

\begin{theorem}
	\label{thm: main}
	Assume that the set $\Omega_1$ is convex, $0\in\operatorname{int}\Omega_1$ and $\Omega_1$ is not a plane, a half-plane, or a strip between two parallel lines. Then the optimal $\gamma$ in problem~\eqref{problem: main} exists by Theorem~\ref{thm: existence} and coincides with the boundary of the polar set $\partial\Omega_1^\circ$ up to a clockwise rotation by $\pi/2$, translation, and dilation by a factor of $\sqrt{A/|\Omega_1^\circ|}$.

    If, additionally, the set $\Omega_1$ is obtained from~\eqref{eq: Omega}, then the optimal curve in problems~\eqref{problem: main} and~\eqref{problem: length minimization} can be parametrized as follows\footnote{Note that by definition $\PPP$ is a lipschitz continuous function and its derivative has countable number of discontinuity points at most. At each point $\theta$ of $\PPP'$ discontinuity, function $\PPP'$ has left and right limits and we understand $\PPP'(\theta)$ as the interval between these limits. So at points $\theta$ of discontinuity of $\PPP'$, the formula below should be viewed as an inclusion.}:
	$$
	\gamma(\theta)=x(\theta) + iy(\theta) = \sqrt{\tfrac{A}{|\Omega_1^\circ|}}\, \big(\PPP(\theta)+i\PPP'(\theta)\big)e^{i\theta} + z_0,
	$$
	where
	$$
	|\Omega_1^\circ| = |\Omega^\circ| = \frac12\int_0^{2\pi} \PPP(\theta)\big(\PPP(\theta)+\PPP''(\theta)\big)\,d\theta,
	$$
	and $z_0=x_0+iy_0\in\mathbb{C}$ is some constant.
\end{theorem}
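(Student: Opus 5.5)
Existence is given by Theorem~\ref{thm: existence}, so I will apply Pontryagin's maximum principle to an optimal process of problem~\eqref{problem: length minimization}. Because the dynamics are invariant under translations and the functional is positively homogeneous, I take $s_1$ free (equivalently, time $[0,1]$ with controls in the cone). The Pontryagin function is
$$
H=pu+qv+\tfrac{r}{2}(xv-yu)-\lambda_0\mu_{\Omega_1}(u,v).
$$
The adjoint equations give $\dot r=0$, $\dot p=-\tfrac r2 v$ and $\dot q=\tfrac r2 u$. Hence $p+\tfrac r2 y$ and $q-\tfrac r2 x$ are constant, and the effective covector is
$$
\psi(t)=\big(p-\tfrac r2 y,\ q+\tfrac r2 x\big)=r\,\big(-(y-y_0),\ x-x_0\big)
$$
for a suitable point $(x_0,y_0)$. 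The maximum condition then requires $\max_{(u,v)}[\psi\cdot(u,v)-\lambda_0\mu_{\Omega_1}(u,v)]$ to be finite and attained. With $\lambda_0=1$, this holds exactly when $\psi(t)\in\Omega_1^\circ$, and positive homogeneity forces the maximum to be $0$. So $H\equiv0$, and $(u,v)$ lies in the normal cone direction of $\Omega_1^\circ$ at $\psi$, i.e.\ $\psi\in\partial\Omega_1^\circ$ whenever $(u,v)\ne0$.

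First I exclude the abnormal case $\lambda_0=0$. Then $\psi\cdot(u,v)\le0$ for all $(u,v)$, so $\psi\equiv0$ and $r=0$, $p=q=0$, contradicting nontriviality of the multipliers. Second, if $r=0$ then $\psi$ is constant and $\psi\cdot(\dot x,\dot y)=\mu_{\Omega_1}$ integrates to $0$ over a closed curve. The functional value would then be $0$, which is impossible since the infimum is positive when $\Omega_1$ is not a half-plane or strip; I would prove this positivity via Lemma~\ref{lm: length is bounded} plus the classical isoperimetric inequality, with the boundedness of the polar doing the work. Therefore $r\ne0$, and $\psi(t)$, i.e.\ the point $(x,y)-(x_0,y_0)$ rotated by $\pi/2$ and scaled by $r$, moves along $\partial\Omega_1^\circ$. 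This shows the curve is a rotated, translated and scaled copy of $\partial\Omega_1^\circ$; the scale is then fixed by the area constraint, giving the factor $\sqrt{A/|\Omega_1^\circ|}$.

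The main obstacle is ruling out that the trajectory traverses $\partial\Omega_1^\circ$ several times, traverses only part of it, or stays still at corner points. Staying still is harmless, since such pieces have zero length. Multiple windings $k$ would enclose area $k$ times that of a single traversal at the same ratio, and comparing the value $k\cdot L\cdot\sqrt{A/k}$ against $L\sqrt A$ shows $k=1$ is strictly better. The sign of $r$ is fixed by the counterclockwise orientation. To conclude that the enclosed figure is exactly $\Omega_1^\circ$ rotated clockwise by $\pi/2$, I also need that $\Omega_1^\circ$ is compact with nonempty interior. Compactness holds because $0\in\operatorname{int}\Omega_1$, and nonempty interior holds because $\Omega_1$ contains no line, as in the proof of Lemma~\ref{lm: length is bounded}.

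For the explicit parametrization, I use that the support function of $\Omega^\circ$ is $\mu_\Omega$. This gives $h_{\Omega^\circ}(e_\theta)=\PPP(\theta)$, so the boundary point of $\Omega^\circ$ with outer normal $e_\theta$ is $(\PPP+i\PPP')e^{i\theta}$, read as an interval at kinks of $\PPP$. The area is $\tfrac12\int h(h+h'')\,d\theta$. Finally, I check that the rotation relating $\Omega_1$ and $\Omega$ cancels against the clockwise rotation in the first part, so that the normal at $\gamma(\theta)$ is indeed $e_\theta$.
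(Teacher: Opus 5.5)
Your proposal is correct and shares the paper's backbone: PMP with $\lambda_0=1$, the covector $\psi=(h_1,h_2)$ running along $\partial\Omega_1^\circ$ as a rotated and rescaled copy of the trajectory, and the scale fixed by the area. Two steps go differently.

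\textbf{Excluding $r=0$.} The paper notes that the velocity would stay in one fixed normal cone of $\Omega_1^\circ$, hence in a half-plane, which is incompatible with a closed curve of positive area. You instead use $H\equiv0$, i.e.\ $\mu_{\Omega_1}(\dot x,\dot y)=\psi\cdot(\dot x,\dot y)$, so a constant $\psi$ makes the functional vanish on a closed curve. Lemma~\ref{lm: length is bounded} alone then gives zero length, so the isoperimetric inequality is not needed. Also, what the lemma actually uses is that $\Omega_1^\circ$ has nonempty interior, not that it is bounded.

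\textbf{Excluding several revolutions.} The paper appeals to a Jacobi conjugate point after the first turn \cite{Lokut1}. You compare the values of all extremals directly. This is legitimate because existence is already established and every extremal is explicit. Your route is self-contained and elementary. The conjugate-point argument carries local information (loss of optimality after one turn), but it depends on external theory. The comparison is cleanest through the same identity: along any normal extremal the value is $\int_0^1\psi\cdot(\dot x,\dot y)\,dt=2rA$, while $A=k|\Omega_1^\circ|/r^2$. Hence the value is $2\sqrt{kA|\Omega_1^\circ|}$, strictly minimal at $k=1$. Since the value is nonnegative, this also forces $r>0$.

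\textbf{Parametrization.} Your support-function derivation uses $h_{\Omega^\circ}(e_\theta)=\mu_\Omega(e_\theta)=\PPP(\theta)$ together with the fact that $\Omega_1^\circ$ is $\Omega^\circ$ rotated by $\pi/2$. It is a shortcut for the paper's explicit computation of supporting covectors and gives the same formulas.

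Two small points should be tightened.
\begin{enumerate}
\item In the abnormal case, $\psi\equiv0$ only yields $r\dot x=r\dot y=0$. You must add that $r\ne0$ would make the curve constant, hence of zero area, before concluding $r=p=q=0$.
\item The one-directional motion along $\partial\Omega_1^\circ$ should be stated explicitly. This is what makes ``$k$-fold traversal with $k\ge1$'' precise and rules out partial or back-and-forth traversals. It follows from $\dot\psi=r(-\dot y,\dot x)$ with $(\dot x,\dot y)$ in the normal cone to $\Omega_1^\circ$ at $\psi$, so $\psi$ moves counterclockwise for $r>0$.
\end{enumerate}
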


\begin{proof}
	
	So, given an optimal curve $\gamma$ in problem~\eqref{problem: main}, the corresponding optimal curve in problem~\eqref{problem: length minimization} is constructed by simply reparametrizing time from the interval $s\in[0;s_1]$ to $t\in[0;1]$. So, let the curve $(\hat{x}(t), \hat{y}(t), \hat{z}(t))$ with control $(\hat{u}(t), \hat{v}(t))\in\Omega_1$ be optimal in \eqref{problem: length minimization}. Since reparametrization of the curve changes nothing, we will without loss of generality assume that $\hat u(t)^2+\hat v(t)^2\equiv\const$ for almost all $t$.
	
	According to the PMP, there exist a constant $\lambda_0\in\{0,1\}$ and a Lipschitz curve $(\hat{p}(t), \hat{q}(t), \hat{r}(t)) \neq 0$ (not simultaneously zero) such that for the Pontryagin function
	$$
	H = -\lambda_0\mu_{\Omega_1}(u,v) + pu + qv + \frac{1}{2} r (xv - yu)
	$$
	the Hamiltonian equations hold:
	$$
	\begin{cases} \dot{\hat{p}} = -\hat{H}_x = -\frac{1}{2}\hat{r}\hat{v} \\ \dot{\hat{q}} = -\hat{H}_y = \frac{1}{2}\hat{r}\hat{u} \\ \dot{\hat{r}} = -\hat{H}_z = 0 \end{cases}
	$$
	(that is $\hat{r}(t) = \const$); as well as the maximum principle:
	$$
	\forall (u, v) \in \R^2 \quad
	-\lambda_0\mu_{\Omega_1}(\hat u,\hat v) + \hat{p}\hat{u} + \hat{q}\hat{v} + \frac{1}{2}\hat{r}(\hat{x}\hat{v} - \hat{y}\hat{u}) \ge
	-\lambda_0\mu_{\Omega_1}(u,v)+\hat{p}u + \hat{q}v + \frac{1}{2}\hat{r}(\hat{x}v - \hat{y}u)
	$$
	
	Denote the coefficients for $u$ and $v$ in $H$ by
	$$
	h_1 = p - \frac{1}{2}ry \qquad h_2 = q + \frac{1}{2}rx.
	$$
	Then
	\begin{equation}
		\label{eq: pmp}
		\begin{gathered}
			\begin{cases}
				\dot{h}_1 = \dot{p} - \frac{1}{2}r\dot{y} = -rv \\
				\dot{h}_2 = \dot{q} + \frac{1}{2}r\dot{x} = ru
			\end{cases}\\
			H = - \lambda_0\mu_{\Omega_1}(u,v) + h_1u+h_2v\to\max_{u,v}.
		\end{gathered}
	\end{equation}
	We will act as follows: we will find all solutions to the system~\eqref{eq: pmp} with $u(t)^2+v(t)^2\equiv\const$, and then distinguish among them the one that corresponds to the optimal trajectory.
	
	So, let $(\lambda_0,h_1,h_2,u,v)$ be a solution to the system~\eqref{eq: pmp}. If $\lambda_0=0$, then the maximum of the linear function $h_1u+h_2v$ over $(u,v)\in\R^2$ is achieved only if $h_1\equiv h_2 \equiv 0$. Consequently $\dot h_1\equiv \dot h_2\equiv 0$, meaning $ru\equiv rv\equiv 0$. If $r=0$, then $h_1=p=0$, $h_2=q=0$ and $\lambda_0=0$ by assumption, and we obtain a contradiction with the non-triviality condition of the PMP. Thus, $r\ne 0$. This means $u\equiv v\equiv 0$. Such a solution to the system~\eqref{eq: pmp} cannot correspond to an optimal trajectory, since if $\hat u\equiv\hat v\equiv 0$, then $\hat x\equiv \hat y\equiv \hat z\equiv0$, but $\hat z(1)=A>0$.
	
	So we have $\lambda_0=1$. In this case, the maximum condition in~\eqref{eq: pmp} takes the form $-\mu_{\Omega_1}(u,v)+h_1u+h_2v\to\max_{u,v}$. Consequently, $(h_1,h_2)\in\partial\mu_{\Omega_1}(u,v)$ or equivalently
	\begin{equation}
		\label{eq: u v in subdifferential}
		(u,v)\in\partial \delta_{\Omega_1^\circ}(h_1,h_2),
	\end{equation}
	where the symbol $\partial$ denotes the subdifferential of a convex function, and $\delta_{\Omega_1^\circ}$ is the indicator function of the set $\Omega_1^\circ$, that is $\delta_{\Omega_1^\circ}(h_1,h_2)=0$ if $(h_1,h_2)\in\Omega_1^\circ$ and $\delta_{\Omega_1^\circ}(h_1,h_2)=+\infty$ if $(h_1,h_2)\not\in\Omega_1^\circ$. In particular, if $(h_1,h_2)\not\in\Omega_1^\circ$
    then the maximum in~\eqref{eq: pmp} is not achieved, and if $(h_1,h_2)\in\operatorname{int}\Omega_1^\circ$, then the maximum is achieved at the unique point $(u,v)=(0,0)$. Therefore if $(h_1,h_2)\in\operatorname{int}\Omega_1^\circ$ at some time instance $\tau$, then $(h_1,h_2)\in\operatorname{int}\Omega_1^\circ$ in a neighborhood of $\tau$, and, consequently, $u=v=0$ on an entire time interval. Consequently, $u\equiv v\equiv 0$, which contradicts the positivity of the area $A>0$.
	
	Thus, $(h_1,h_2)\in\partial\Omega_1^\circ$ for all $t$. In this case, the subdifferential $\partial\delta_{\Omega_1^\circ}(h_1,h_2)$ coincides with the normal cone to $\Omega_1^\circ$ at the point $(h_1,h_2)\in\partial\Omega_1^\circ$.
	
	We will now show that $r\ne 0$. Indeed, if $r=0$, then $h_1\equiv\const$ and $h_2\equiv\const$ by virtue of system~\eqref{eq: pmp}. This means, $(u,v)$ does not leave the normal cone to $\Omega^\circ$ at the point $(h_1,h_2)\in\partial\Omega_1^\circ$. Since $\Omega_1$ is not the entire plane, the polar set $\Omega_1^\circ$ is not a single point set, and the normal cone does not coincide with the entire plane at any point. Then by virtue of convexity, the normal cone is contained within some half-space, which contradicts the closedness of the curve $(x(t),y(t))$.
	
	Thus, the point $(h_1(t),h_2(t))$ moves along the boundary of the polar set $\partial\Omega_1^{\circ}$, wherein the case $r>0$ corresponds to counterclockwise motion, and $r<0$ to clockwise motion. Since we assumed without loss of generality that $u^2(t)+v^2(t)\equiv\const$, the point $(h_1(t),h_2(t))$ moves along the boundary of the polar set with constant speed. Note that since $0\in\operatorname{int}\Omega_1$, the polar set $\Omega_1^\circ$ is a compact set, meaning that the polar set $\Omega_1^\circ$ has a finite area $|\Omega_1^\circ|<\infty$, and its boundary $\partial\Omega_1^\circ$ has finite length.
	
	Further, since $r\ne 0$, we have
	$$
	\dot{x} = \dot{h}_2 / r \qquad \text{and} \qquad \dot{y} = -\dot{h}_1 / r,
	$$
	that is, the curves $(x(t), y(t))$ and $(h_2(t), -h_1(t))$ coincide after dilation and translation.
	Since $x(0) = x(T)$ and $y(0) = y(T)$, the curve $(x(t), y(t))$ completes an integer number of revolutions along the boundary of the rotated and dilated polar set $\frac{1}{r} e^{-i\pi/2} \partial \Omega_1^\circ$ (up to translation by a constant vector). However, there cannot be more than 1 revolution, because a Jacobi conjugate point arises after the first revolution (see \cite{Lokut1}). Let us also note that the condition of positive area $A>0$ guarantees that the traversal is completed counterclockwise, that is $r>0$ and, moreover, $|\Omega_1^\circ|=r^2A$. Since the set $\Omega_1$ is not a plane, half-plane, or a strip between parallel lines, the polar set $\Omega_1^\circ$ has a non-empty interior. Indeed, if $|\Omega_1^\circ|=0$, then the polar set $\Omega_1^\circ$ is a one-dimensional convex compact set, i.e., a line segment or a point, and therefore $\Omega_1=\Omega_1^{\circ\circ}$ is a plane, half-plane, or a strip\footnote{By definition, a polar set contains the origin. If $\Omega_1^\circ=\{(0,0)\}$, then $\Omega_1$ is a plane; if $\Omega_1$ is a line segment and the point $(0,0)$ is one of the endpoints of this segment, then $\Omega_1^\circ$ is a half-plane; if $\Omega_1$ is a line segment and the point $(0,0)$ lies interior to this segment, then $\Omega_1$ is a strip between parallel lines.}. Consequently, $|\Omega_1^\circ|\ne 0$ and $r^2=|\Omega_1^\circ|/A$.
	
	Let us compute $\partial \Omega_1^\circ$ in terms of the original function $\PPP(\theta)$. Every point $(\xi, \eta) \in \partial \Omega_1$ corresponds to a set of supporting covectors, that is, points $(p, q) \in \partial \Omega_1^\circ$, such that $p\xi + q\eta = 1$ and $\forall (u, v) \in \partial \Omega_1$ it holds that $pu + qv \le 1$. Since
	$$ \partial \Omega_1 = \left\{ \left( \frac{\cos\varphi}{\PPP(\varphi-\pi/2)} ; \frac{\sin\varphi}{\PPP(\varphi-\pi/2)} \right) \right\}, $$
	the covector $(p, q)$ lies on the boundary $\partial \Omega_1^\circ$ if
	$$ \max_{\varphi : \PPP(\varphi-\pi/2) > 0} \frac{p \cos\varphi + q \sin\varphi}{\PPP(\varphi-\pi/2)} = 1. $$
	
	At the point of the maximum, obviously, $\PPP(\varphi-\pi/2) \neq 0$, therefore
	$$ \begin{cases} p \cos\varphi + q \sin\varphi = \PPP(\varphi-\pi/2) \\ (-p \sin\varphi + q \cos\varphi)\PPP(\varphi-\pi/2) - (p \cos\varphi + q \sin\varphi)\PPP'(\varphi-\pi/2) = 0 \end{cases} $$
	That is $-p \sin\varphi + q \cos\varphi = \PPP'(\varphi-\pi/2)$.
	Meaning, the formulas
	\begin{align*}
		p &= \PPP(\varphi-\pi/2)\cos\varphi - \PPP'(\varphi-\pi/2)\sin\varphi \\
		q &= \PPP(\varphi-\pi/2)\sin\varphi + \PPP'(\varphi-\pi/2)\cos\varphi
	\end{align*}
	define $\partial \Omega_1^\circ$. Let us compute the area of the polar set:
	$$
	|\Omega_1^\circ| = \frac12\int_0^{2\pi} (pq'-p'q)\,d\varphi.
	$$
	Denote $z=p+iq$. Then $z=e^{i\varphi}(\PPP+i\PPP')$ and $p'+iq'=z'=ie^{i\varphi}(\PPP+\PPP'')$. Hence $pq'-p'q=\operatorname{Im}\bar zz' = \PPP(\PPP+\PPP'')$ and
	$$
	|\Omega_1^\circ| = \frac12\int_0^{2\pi} \PPP(\varphi-\tfrac\pi2)\big(\PPP(\varphi-\tfrac\pi2)+\PPP''(\varphi-\tfrac\pi2)\big)\,d\varphi =
	\frac12\int_0^{2\pi} \PPP(\theta)(\PPP(\theta)+\PPP''(\theta))\,d\theta
	$$
	
	The optimal curve can be parameterized via $\varphi$, since it coincides with $\frac{1}{r}e^{-i\pi/2}\partial\Omega_1^\circ$:
	\begin{align*}
		x(\varphi) &= \frac{1}{r} \left( \PPP(\varphi-\pi/2)\sin\varphi + \PPP'(\varphi-\pi/2)\cos\varphi \right) + x_0 \\
		y(\varphi) &= \frac{1}{r} \left( -\PPP(\varphi-\pi/2)\cos\varphi + \PPP'(\varphi-\pi/2)\sin\varphi \right) + y_0
	\end{align*}
	
	If we denote $\theta = \varphi - \pi/2$, we obtain
	\begin{align*}
		x(\theta) &= \frac{1}{r} \left( \PPP(\theta)\cos\theta - \PPP'(\theta)\sin\theta \right) + x_0\\
		y(\theta) &= \frac{1}{r} \left( \PPP(\theta)\sin\theta + \PPP'(\theta)\cos\theta \right) + y_0,
	\end{align*}
	which coincides with the formula stated in the theorem formulation.
	
\end{proof}

\begin{remark}
	Note that in problem~\eqref{problem: main} we sought a minimum among all closed curves, but the optimal curve turned out to be convex.
\end{remark}

\section{The case of (generally) nonconvex \texorpdfstring{$\Omega$}{Ω}}
\label{sec: non convex}

%{\red From now on, we slightly change the notation for $S^1$: now it is the unit circumference on the plane.}

In the previous section we studied the particular case of convex $\Omega$. In this section we consider the general case. We prove the theorem of existence for (generally) nonconvex $\Omega$ (Theorem \ref{t1}) and show that the solution in the general case can be obtained through convexification of $\Omega$ (Theorem \ref{t2}).

These theorems  will be proved using the representation of the resistance functional in terms of the surface area measure.

\begin{theorem}\label{t1}
(a) Let $\PPP > 0$ on an interval $(\theta_1,\theta_2)$ greater than $\pi$, $\theta_2-\theta_1>\pi$. (It may happen, in particular, that $\PPP$ is positive everywhere.) Then the minimum of problem~\eqref{F} exists, and its value is positive.

(b) Let $\PPP > 0$ on an interval $(\theta_0,\theta_0+\pi)$, and $\PPP(\theta_0) = 0 = \PPP(\theta_0+\pi)$. Then the infimum is zero, and it is not attained.

(c) Suppose that any interval $(\theta,\theta+\pi)$ contains a zero of $\PPP$ (and therefore $\PPP$ has at least 3 zeros).
%, and the convex hall of the zero set contains the origin in its interior).
Then the minimum exists and is equal to zero. A minimizer is unique if and only if $\PPP$ contains exactly 3 zeros.
\end{theorem}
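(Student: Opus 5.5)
The plan is to work throughout in the language of surface area measures, that is, with problem~\eqref{nu} in the scale-invariant form~\eqref{ProblEquiv}. The common tool for all three parts is an elementary fact about a zero-mean measure $\nu$ and a unit vector $e$: from $\int_{S^1} n\,d\nu(n)=0$ we get
$$\int_{S^1}(n\cdot e)_+\,d\nu(n)=\int_{S^1}(n\cdot e)_-\,d\nu(n).$$
Hence the mass of $\nu$ on one side of a direction is controlled by its mass on the other side.

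\textbf{Part (a).} Let $I=(\theta_1,\theta_2)$ be the arc where $\PPP>0$, and let $J=S^1\setminus I$, a closed arc of length $<\pi$ (it may be empty).
\begin{enumerate}
\item Since $J$ is shorter than $\pi$, there is a unit vector $e$ and $\delta>0$ with $n\cdot e\ge\delta$ on $J$.
\item The closed half-circle $K=\{n\cdot e\le 0\}$ is disjoint from $J$, so $K$ is a compact subset of $I$. Therefore $\PP\ge m>0$ on $K$.
\item The identity above gives
$$\delta\,\nu(J)\le\int(n\cdot e)_+\,d\nu=\int(n\cdot e)_-\,d\nu\le\nu(K)\le m^{-1}\FF(\nu).$$
Also $\nu(I)\le(\min_{S^1\setminus U}\PP)^{-1}\FF(\nu)+\nu(J')$, with the same argument applied near the endpoints. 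More simply, one can enlarge $J$ slightly to a closed arc $J'$ still shorter than $\pi$ with $\PP\ge m'>0$ off $J'$.
\item Together these give $|\partial B|=\nu_B(S^1)\le C\,\FFF(B)$.
\item The classical isoperimetric inequality $|B|\le|\partial B|^2/4\pi$ then yields $\FFF(B)/\sqrt{|B|}\ge c>0$.
\end{enumerate}
For existence, take a minimizing sequence of bodies of area $A$. Their perimeters are uniformly bounded, so after translating them to contain the origin they lie in a fixed ball. Blaschke selection gives a Hausdorff-convergent subsequence. Area is continuous, so the limit has area $A$. Surface area measures converge weakly, and $\PP$ is continuous, so $\FF$ passes to the limit. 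The limit is therefore a minimizer. I expect this compactness/continuity step to be the only technical point, and it is standard.

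\textbf{Part (b).} For the infimum, use rectangles $R_\varepsilon$ with sides of length $1/\varepsilon$ having normals $e_{\theta_0}$ and $e_{\theta_0+\pi}$, and sides of length $\varepsilon$. Since $\PPP(\theta_0)=\PPP(\theta_0+\pi)=0$,
$$\FFF(R_\varepsilon)=\varepsilon\big(\PPP(\theta_0+\tfrac\pi2)+\PPP(\theta_0-\tfrac\pi2)\big),\qquad |R_\varepsilon|=1,$$
so the ratio in~\eqref{ProblEquiv} tends to $0$. For non-attainment, note that $\FF(\nu)=0$ forces $\nu$ to be supported on the zero set of $\PP$. That set lies in the closed half-circle complementary to $(\theta_0,\theta_0+\pi)$. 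A zero-mean measure on a closed half-circle must be concentrated on its two antipodal endpoints, because testing against the inner normal direction $e$ of that half-circle gives $\int(n\cdot e)\,d\nu=0$ with $n\cdot e\ge0$. Such a $\nu$ corresponds to a segment, which has zero area.

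\textbf{Part (c).} Every open half-circle contains a zero of $\PP$, so $0$ lies in the interior of the convex hull of the (closed) zero set $Z$. By Carathéodory's theorem we can pick three zeros $n_1,n_2,n_3$ whose triangle contains $0$ in its interior. Then there are unique positive weights with $\sum w_in_i=0$. The corresponding triangle has $\FF=0$, and after scaling it has area $A$; so the minimum is $0$ and is attained.

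The minimizers are exactly the bodies of area $A$ whose measure $\nu$ is a zero-mean measure supported on $Z$.
\begin{itemize}
\item If $|Z|=3$, such a measure is unique up to a positive factor, since the three vectors admit only a one-dimensional space of linear relations. The area normalization fixes the factor, so the minimizer is unique up to translation by Alexandrov's theorem.
\item If $Z$ contains a fourth point $n_4$, write $-n_4=\sum a_in_i$ with $a_i\ge0$; this is possible because the cone generated by $n_1,n_2,n_3$ is the whole plane. Then $\nu_t=\sum w_i\delta_{n_i}+t\big(\delta_{n_4}+\sum a_i\delta_{n_i}\big)$ is a zero-mean measure supported on $Z$ for every $t\ge0$. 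After rescaling to area $A$, these measures give non-translate minimizers, because their supports differ for $t=0$ and $t>0$.
\end{itemize}
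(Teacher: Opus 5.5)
\textbf{Overall.} Parts (a) and (b), and the overall plan of (c), follow the paper's proof. In (a), the zero-mean identity tested against a direction $e$ bounds the perimeter by a multiple of $\FFF(B)$; Blaschke selection and weak continuity of surface area measures then give a minimizer. In (b), thin rectangles give infimum $0$, and $\FF(\nu)=0$ would force $\nu$ onto $\{\pm e_{\theta_0}\}$, i.e.\ a segment.

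Two small remarks on (a). First, your initial version of step 3 does not control $\nu(I)$, because $\PP$ may tend to $0$ at the ends of $I$. The enlargement $J\subset\operatorname{int}J'$ is genuinely needed, and it is exactly the paper's choice of a closed arc longer than $\pi$ on which $\PP>c$. Second, your isoperimetric step makes the positivity of the minimum explicit, which the paper leaves implicit.

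\textbf{The gap in (c).} One step fails as stated, and the paper's proof makes the same assertion. Carath\'eodory's theorem only gives three zeros with $0\in\conv\{n_1,n_2,n_3\}$, where $0$ may lie on a side rather than in the interior.

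Take $\PPP(\theta)=\sin^2 2\theta$. Its zero set is $Z=\{\pm e_0,\pm e_{\pi/2}\}$. Every open half-circle contains a zero, yet any three of these points include an antipodal pair, so $0$ lies on a side of every such triangle. In this case your triangle does not exist. Your non-uniqueness construction also fails, because writing $-n_4=\sum a_in_i$ with $a_i\ge 0$ requires the cone over $n_1,n_2,n_3$ to be the whole plane.

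The statement itself survives. The rectangles of area $A$ with normals $\pm e_0,\pm e_{\pi/2}$ all have $\FF=0$; they form a one-parameter family of minimizers, consistent with $4$ zeros. One can also check that $Z$ consisting of exactly two antipodal pairs is the only obstruction to a good triple.

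A uniform repair goes as follows. By Steinitz's theorem, choose $m\le 4$ points of $Z$ with $0$ in the interior of their convex hull; if $m=3$ and $|Z|\ge 4$, add one more point of $Z$. Then $0$ is a strictly positive combination of the chosen points. Hence the nonnegative solutions of $\sum w_in_i=0$ contain a relatively open subset of the kernel of $w\mapsto\sum w_in_i$, which has dimension $m-2$. This gives a polygon with $\FF=0$. When $m\ge 4$, it also gives two non-proportional zero-mean measures on $Z$, hence two minimizers that are not translates of each other. Your uniqueness argument for $|Z|=3$ is fine as written.
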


\begin{remark}
It is instructive to compare this theorem with Theorem \ref{thm: existence} from the previous section. Theorem \ref{t1} is valid for all $\PPP$, while Theorem \ref{thm: existence} holds when the corresponding set $\Omega=\{(u,v)\,\Big|\, \sqrt{u^2+v^2}\,\PPP(\operatorname{Arg}(v-iu)) \le 1 \}$ is convex. On the other hand, Theorem \ref{thm: existence} seeks for a solution in a class of closed curves and asserts that the optimal curve boulds a convex set, while Theorem \ref{t1} deals with convex sets $B$ and their boundaries $\partial B.$

Note also that the conditions of non-existence of solution are in agreement in Theorems \ref{thm: existence} and \ref{t1}. Indeed, a convex set $\Omega$ containing 0 in its interior is a half-plane or a strip if and only if condition (b) is satisfied.
\end{remark}

\begin{proof}
It is convenient to work with the function $\PP : S^1 \to \mathbb{R}$ defined by $\PP(\cos\theta,\sin\theta) = \PPP(\theta)$ and with the functional $\FF$ in the form \eqref{Fmeas}. In (a), $\PP > 0$ on an arc of the circumference $S^1$ greater than $\pi$. In (b), $\PP > 0$ on an open semicircumference with the endpoints $e= (\cos\theta_0, \sin\theta_0)$ and $-e$. In (c), any open semicircumference contains a zero of $\PP$ (and therefore $\PP$ has at least 3 zeros, and the convex hull of the zero set contains the origin in its interior).

(a) Let us first show that a sequence of bodies $B_m$ minimizing $\FFF$ has uniformly bounded perimeters.% or equivalently, the sequence $\{ \nu_{B_m}(S^1) \}$ is bounded.

First we consider the particular case when $\PP$ is always positive. Then there exists a positive constant $c$ such that $\PP \ge c$. It follows that
$$
c|\partial B_m|\, =\, \int_{S^1} c\, d\nu_{B_m}(n)\, \le\, \FF(\nu_{B_m})\, =\, \FFF(B_m).
$$
Since the sequence $\FFF(B_m)$ converges, and therefore, is bounded above, it follows that $|\partial B_m|$ is also bounded above.

In the general case take a closed arc $\gamma \subset S^1$ greater than $\pi$ on which $\PP$ is positive. There exists a positive constant $c$ such that $\PP>c$ on $\gamma$.
Denote $\gamma' = S^1 \setminus \gamma$ and take a vector $e \in S^1$ and a positive constant $k$ such that $\langle e,\, n \rangle \ge k$ for all $n \in \gamma'$. It follows that for $\nu = \nu_{B_m}$
$$
\Big\langle e,\ \int_{\gamma'} n\, d\nu(n) \Big\rangle\ =\
\int_{\gamma'} \langle e, n \rangle\, d\nu(n)\ \ge\ k\, \nu(\gamma'),
$$
and since $\big\langle -e,\ \int_{\gamma} n\, d\nu(n)\big\rangle = \big\langle e,\ \int_{\gamma'} n\, d\nu(n)\big\rangle$, we have
$$
\nu(\gamma)\ \ge\ \int_{\gamma} \langle -e, n \rangle\, d\nu(n)\ =\
\Big\langle -e,\ \int_{\gamma} n\, d\nu(n) \Big\rangle\ \ge\ k\, \nu(\gamma').
$$
It follows that the perimeter of the $m$-th body, $|\partial B_m|$, satisfies
$$
|\partial B_m|\, =\, \nu(S^1)\, =\, \nu(\gamma) +\nu(\gamma')\, \le \, \Big( 1 + \dfrac{1}{k}\Big)\, \nu(\gamma)\, \le\, \Big( 1 + \dfrac{1}{k}\Big)\,\frac{1}{c}\, \FFF(B_m),
$$
and therefore, is bounded above.

Hence the diameters of $B_m$ are uniformly bounded. Making translations if necessary, one can assume that all bodies are contained in a bounded set. By the Blaschke selection theorem, there exists a subsequence, which for convenience will be denoted in the same way, $B_{m}$, converging in the Hausdorff sense to a convex body $B_*$. Then the areas of $B_{m}$ converge to the area of $B_{*}$ (since both $B_m$ and $B_*$`are convex), hence $|B_{*}|=A.$ It is known that Hausdorff convergence of a sequence of convex bodies implies weak convergence of their surface area measures (see, e.g., Theorem 4.2.1 in the book \cite{S}), that is, $\nu_{B_{m}} \to \nu_{B_*}$ as $k\to\infty$. Hence $\FF(\nu_{B_*}) = \lim_{m\to\infty} \FF(\nu_{B_{m}})$ is the minimum of the functional, and so, $\nu_{B_{*}}$ is a minimizer to problem \eqref{F} and $B_*$ is a minimizer to problem \eqref{nu}.
	\vspace{2mm}
	
(b) The zero infimum is attained on a sequence of rectangles whose side lengths go to zero and to infinity, with their product equal to $A$. The outward normals to the longer sides are equal to $e=(\cos\theta_0,\sin\theta_0)$ and $-e$.

The surface area measure of any admissible body $B$ cannot be supported in $\{ e, -e \}$, since otherwise $B$ is a segment and hence has zero area. It follows that both open semicircumferences bounded by $e$ and $-e$, say $\gamma$ and $\gamma'$, have positive $\nu_B$-measure. Suppose that $\PP >0$ on $\gamma$; then
$$
\FFF(B)\, = \, \FF(\nu_B) \, \ge\, \int_{\gamma} \PP (n)\, d\nu_B(n)\, >\, 0.
$$
Therefore, the zero infimum is not attained on a body.
	\vspace{2mm}

(c) We have $\FF(\nu_{B}) = 0$ if and only if the support of $\nu_B$ is contained in the zero set of $\PP$. Take 3 points, say $e_1,\, e_2$ and $e_3$ from this set so that the triangle with vertices at these points contains the origin in its interior. Then the triangle $B$ with area $A$ and the outward normals to the sides equal to $e_1,\, e_2$, and $e_3$ satisfies $\FF(\nu_B) = 0$. If the zero set of $\PP$ coincides with this triple of vectors, the minimizer is uniquely determined; otherwise a set of vectors from the zero set of $\PP$ containing the origin in its interior can be chosen in different ways, for example a triple and a quadruple of vectors, which generate a triangle and a quadrangle with zero value of the functional.
\end{proof}

From now till the end of this section we assume that $\PPP > 0$ on an interval greater than $\pi$. According to item (a) of Theorem \ref{t1}, the minimum of $\FFF$ exists and is positive. If $\PPP$ takes zero at some points, denote by $\gamma$ the greatest interval where $\PPP$ is positive, and denote by $\theta_1$ and $\theta_2$ its endpoints. We have $\PPP(\theta_1) = 0 = \PPP(\theta_2)$; it may happen that $\theta_1 = \theta_2$. If $\PPP>0$ everywhere then $\theta_1$ and $\theta_2$ are not defined.

Let us define the set $\Omega \subset \mathbb{R}^2$ by the inequality $r \PPP(\theta) \le 1$ in polar coordinates $r,\, \theta$, where $x=r\cos\theta,\, y=r\sin\theta$. Equivalently, $\Omega$ is the star-shaped set on the plane such that $\PPP(\theta) = \inf \Big\{ \dfrac{1}{r} : (r,\theta) \in \Omega \Big\}$ for all values of $\theta$. The set $\Omega$ coincides with the one defined in Section \ref{sec: convex}.
%%%%%up to the rotation by $\pi/2$.

Let $\tilde\Omega$ be the convex hull of $\Omega$, $\tilde\Omega = \conv \Omega$. Denote the function $\tilde{\PPP}$ on $\SSS$ by
$$
\tilde \PPP(\theta) = \inf \Big\{ \dfrac{1}{r} : (r,\theta) \in \tilde\Omega \Big\},
$$
and define the functional $\tilde{\FFF}$ by
\begin{equation}\label{tildeF}
\tilde \FFF(B) \ =\ \int_{\partial B} \tilde \PPP(\theta_\xi)\, d\xi.
\end{equation}
Note that
%$\tilde {\PPP}\le \PPP$, and therefore, $\tilde \FFF(B) \le \FFF(B)$ for all $B$:
$$
    \Omega\subseteq\tilde\Omega
    \quad\implies\quad
    \forall\theta\ \ \PPP(\theta)\ge\tilde \PPP(\theta)\ge 0
    \quad\implies\quad
    \forall B\ \ \FFF(B)\ge\tilde \FFF(B)\ge 0.
$$

Recall that a point $\xi \in \tilde\Omega$ is called an {\it extreme} point of the convex set $\tilde\Omega$, if it is not an interior point of a line segment contained in $\tilde\Omega$. The set of all extreme points of $\tilde\Omega$ is denoted as $\extr(\tilde\Omega)$. One easily sees that $\extr(\tilde\Omega) \subset \partial\Omega$ and $\PP(v/|v|) = \tilde\PP(v/|v|)$ for all $v \in \extr(\tilde\Omega)$.

Let us define the set $\KKK = \KKK_{\Omega} \subset S^1$ as follows:
$$
\KKK \eqdef \Big\{ \frac{v}{|v|}\,:\ v \in \extr(\tilde\Omega) \Big\}\subset S^1, \quad \text{if} \ \, \tilde\Omega \, \ \text{is bounded},
$$
and
$$
\KKK \eqdef \Big\{ \frac{v}{|v|}\,:\ v \in \extr(\tilde\Omega) \Big\} \cup \{ e_1, e_2\}, \quad  \text{if} \, \ \tilde\Omega \ \ \text{is unbounded},
$$
where $e_1 \eqdef e_{\theta_1},\, e_2 \eqdef e_{\theta_2}.$
In other words, $\KKK$ is the central projection of the set of extreme points of $\tilde\Omega$ on the circle $S^1$, if $\tilde\Omega$ is bounded, and the projection plus the vectors $e_1$ and $e_2$, otherwise.

Since $\PP(e_i) = \tilde\PP(e_i) = 0,\, i=1,\,2,$ one concludes that $\PP=\tilde \PP$ on $\KKK.$ Further, using that the convex hull of the set $\tilde\Omega$ coincides with itself, it is easy to check that $\KKK = \KKK_{\Omega}$ coincides with~$\KKK_{\tilde\Omega}$.

\begin{remark}
The set $\KKK \subset S^1$ is compact. Indeed, this follows easily from the fact that non-extreme points form an open subset of the boundary $\partial\Omega$. If $\Omega$ is strictly convex (hence its boundary does not contain line segments) and is either bounded or contains only one ray with the vertex at the origin (correspondingly, $\PP$ is either positive or has only one zero),  then $\KKK = S^1$. Otherwise, the complement $S^1 \setminus \KKK$ is the disjoint union of finitely or countably many open circular arcs. Each arc corresponds to a dimple or a line segment on the boundary of $\Omega$. Additionally, if $\Omega$ contains more than one ray with the vertex at the origin (and therefore, $e_1 \ne e_2$) then one of the arcs is $(e_1,\, e_2)$.
\end{remark}

\begin{theorem}\label{t2}
Let $B$ be a solution to problem \eqref{F}. Then

(a)	the surface area measure of $B$ is supported in $\KKK$;

(b) the minimum of $\FFF$ coincides with the minimum of $\tilde \FFF$,
$$\min_{|B|=A} \FFF(B) = \min_{|B|=A} \tilde \FFF(B),$$
and the sets of solutions for $\FFF$ and for $\tilde \FFF$ coincide.
\end{theorem}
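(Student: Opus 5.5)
The plan is to show that any body $B$ can be replaced by a body $B'$ of the same area with $\FFF(B')\le\tilde\FFF(B)$. We build $B'$ by redistributing the surface area measure of $B$ onto $\KKK$. Both parts of the theorem then follow from the chain
$$\min\FFF\ \ge\ \min\tilde\FFF\ \ge\ \inf\FFF.$$
Here the first inequality is $\FFF\ge\tilde\FFF$, which holds pointwise since $\PPP\ge\tilde\PPP$. The minima exist by Theorem~\ref{t1}(a), applied to $\PPP$ and to $\tilde\PPP$; note that $\tilde\PPP>0$ wherever $\PPP>0$, because on that arc $\tilde\Omega$ is bounded in the relevant directions.

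\textbf{Key construction.} Let $n=e_\theta\in S^1\setminus\KKK$. The point $v=e_\theta/\tilde\PPP(\theta)\in\partial\tilde\Omega$ is not extreme, so one of two things happens.

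First, $v$ may lie in the relative interior of a segment $[v_1,v_2]\subset\partial\tilde\Omega$ with $v_i\in\extr\tilde\Omega$. Then $v=\alpha_1v_1+\alpha_2v_2$ with $\alpha_i\ge 0$ and $\alpha_1+\alpha_2=1$. Write $n_i=v_i/|v_i|\in\KKK$. Since $\PP(n_i)=\tilde\PP(n_i)=1/|v_i|$, positive homogeneity gives
$$ e_\theta=\tilde\PP(n)^{-1}\cdots,\qquad n=\beta_1n_1+\beta_2n_2,\quad \beta_i=\alpha_i|v_i|\,\tilde\PP(n)\ge0, $$
and moreover $\beta_1\PP(n_1)+\beta_2\PP(n_2)=\tilde\PP(n)\,(\alpha_1+\alpha_2)=\tilde\PP(n)$.

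Second, $v$ may lie on an unbounded ray of $\partial\tilde\Omega$. In that case the decomposition uses one extreme point together with a direction $e_1$ or $e_2$, along which $\PP=0$, and the same identity holds. This is exactly why $e_1,e_2$ are added to $\KKK$ when $\tilde\Omega$ is unbounded.

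We need to choose the endpoints measurably in $n$. This is possible because $S^1\setminus\KKK$ is a countable union of open arcs, each corresponding to one flat piece of $\partial\tilde\Omega$, so on each arc the endpoints $n_1,n_2$ are fixed.

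Now define $\nu'=\nu_B|_{\KKK}+T_\#(\nu_B|_{S^1\setminus\KKK})$, where each atom of mass $d\nu$ at $n$ is split into masses $\beta_1\,d\nu$ at $n_1$ and $\beta_2\,d\nu$ at $n_2$. Because $\beta_1n_1+\beta_2n_2=n$, the barycenter $\int n\,d\nu$ is preserved, so $\nu'$ is zero-mean. By the identity above, $\FF(\nu')=\tilde\FF(\nu_B)\le\FF(\nu_B)$. In the unbounded case, if all mass collapses to $\{e_1,e_2\}$ with $e_2=-e_1$, this would contradict Theorem~\ref{t1}(b); part (a) of Theorem~\ref{t1} excludes that case.

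\textbf{Main obstacle: the area.} We need $|B_{\nu'}|\ge|B_\nu|$; then rescaling $B_{\nu'}$ down to area $A$ only decreases $\FFF$. Geometrically, the move replaces each boundary edge with normal $n$ by the two sides of a triangle with normals $n_1,n_2$ erected outward on that edge, so area is added. I would prove this by approximating $\nu_B$ by discrete measures, i.e.\ polygons, where the replacement is literally cutting an edge and attaching a triangle outside. The result is still convex because $n_1,n_2$ are the neighbouring directions in $\KKK$ bracketing the arc. Hence the area increases, by at least the area of the attached triangles. I would then pass to the limit using continuity of area under weak convergence of surface area measures together with Hausdorff convergence.

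\textbf{Conclusion.} For a minimizer $B$ of $\FFF$ whose measure gives positive mass outside $\KKK$, the area gain is strict, since a strictly positive triangle area is added. Rescaling then gives a body of area $A$ with strictly smaller $\FFF$, a contradiction. This proves (a). For (b), applying the construction to a minimizer of $\tilde\FFF$ gives $\min\FFF\le\min\tilde\FFF$, so the two minima are equal. Then:
\begin{itemize}
\item every minimizer $B$ of $\FFF$ satisfies $\tilde\FFF(B)\le\FFF(B)=\min\tilde\FFF$, so it minimizes $\tilde\FFF$;
\item conversely, every minimizer of $\tilde\FFF$ must also be supported on $\KKK$ by the same strictness argument, and $\PP=\tilde\PP$ on $\KKK$, so it minimizes $\FFF$.
\end{itemize}
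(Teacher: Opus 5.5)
Your mechanism is in substance the paper's. Apply your redistribution to a single complementary arc $(\epsilon_1,\epsilon_2)$ of $\KKK$. The endpoint masses $\int\beta_i\,d\nu_B$ then satisfy the closing relation $\lambda_1\epsilon_1+\lambda_2\epsilon_2=\int_{(\epsilon_1,\epsilon_2)}n\,d\nu_B$. This relation uniquely fixes the lengths of the two new sides of the paper's body $\hat B$, so the resulting measure is exactly $\nu_{\hat B}$. Your identity $\beta_1\PP(n_1)+\beta_2\PP(n_2)=\tilde\PP(n)$ is the paper's comparison of $\PP$ with the linear function $\langle c,n\rangle$.

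\textbf{The gap} is in the step you yourself flag, the area.
\begin{itemize}
\item \emph{Convexity fails.} Attaching a triangle to each edge of a polygon does not give a convex polygon once two or more edges have normals in the same arc. The normals then run $n_1,n_2,n_1,n_2,\dots$, producing reflex vertices. So you have not actually compared the area of the convex polygon $B_{\nu'}$ with that of the approximant.
\item \emph{Strictness is lost in the limit.} Part (a), and the claim that minimizers of $\tilde\FFF$ minimize $\FFF$, both rest on the \emph{strict} inequality $|B_{\nu'}|>|B|$. Suppose a smooth part of $\partial B$ with normals in the arc is approximated by $N$ edges of length about $L/N$. The attached triangles then have total area $O(L^2/N)\to0$. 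In the limit you only get $|B_{\nu'}|\ge|B|$. Then the rescaled $B_{\nu'}$ is merely another minimizer, not a contradiction. For non-polygonal $B$ there is no ``strictly positive triangle'' at all.
\end{itemize}

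\textbf{How the paper avoids this.} It treats one arc at a time and realizes the redistribution geometrically. $\hat B$ is bounded by the two support lines of $B$ with normals $\epsilon_1,\epsilon_2$ together with the rest of $\partial B$. Hence $\hat B\supsetneq B$, and $|\hat B|>|B|$ is immediate whenever $\nu_B((\epsilon_1,\epsilon_2))>0$. A single arc of positive mass already contradicts optimality, so no simultaneous construction over countably many arcs and no approximation is needed.

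\textbf{A repair that keeps your global $\nu'$.} Use Minkowski's inequality for the mixed area $V$. Set $B'=B_{\nu'}$. Sublinearity of $h_{B'}$ gives $2|B'|=\int h_{B'}\,d\nu'\ge\int h_{B'}\,d\nu_B=2V(B',B)\ge 2\sqrt{|B'|\,|B|}$, so $|B'|\ge|B|$. Equality holds only if $B'$ and $B$ are homothetic. That is impossible when $\nu_B(S^1\setminus\KKK)>0$, because $\nu'(S^1\setminus\KKK)=0$.

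\textbf{Two smaller points.}
\begin{itemize}
\item The claim ``$\tilde\PPP>0$ wherever $\PPP>0$'' is false in general. On the arc $(e_1,e_2)$ of recession directions of $\tilde\Omega$ one has $\tilde\PPP\equiv0$ even where $\PPP>0$. What holds, and is all Theorem~\ref{t1}(a) needs, is $\tilde\PPP>0$ on the arc of length greater than $\pi$ between $\theta_1$ and $\theta_2$.
\item On $(e_1,e_2)$ your point $v=e_\theta/\tilde\PPP(\theta)$ is undefined. That arc must be handled directly with $n_1=e_1$, $n_2=e_2$, as in the paper's case (ii).
\end{itemize}
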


\begin{proof}
(a) Let $B$ be an optimal body. We need to show that

(i) If $\partial\tilde\Omega$ contains a segment $[v_1,\, v_2]$ then the $\nu_B$-measure of the arc $(v_1/|v_1|,\, v_2/|v_2|)$ is zero.

(ii) If $\tilde\Omega$ is unbounded and $e_1 \ne e_2$ then the $\nu_B$-measure of the arc $(e_1, e_2)$ is zero.

  %(iii) Suppose that $\partial\tilde\Omega$ contains a ray (and therefore $\tilde\Omega$ is unbounded and the director vector of the ray is $e_i$, $i=1$ or 2), and let $v_0$ be the vertex of the ray. Then the $\nu_B$-measure of the arc $(v_0/|v_0|,\, e_i)$ is zero.
\vspace{2mm}

(i) Take a maximal segment contained in $\partial\tilde\Omega$. We will consider separately the cases (i$_1$) when it is bounded and (i$_2$) when it is unbounded, and hence, semi-infinite.

(i$_1$) Let $[v_1,\, v_2]$ be a maximal segment contained in $\partial\tilde\Omega$, and let the line of support to $\tilde\Omega$ containing this segment be given by $\langle c, v \rangle = 1$, where $c$ is a nonzero vector. Denote $r_i \eqdef |v_i|$ and $\epsilon_i \eqdef v_i/|v_i|,\, i=1,\,2$. Since $v_i \in \partial\Omega$, we have $r_i \PP(\epsilon_i) =1$ and $r_i \langle c, \epsilon_i \rangle = 1$, hence
$$
\PP(\epsilon_1) = \langle c, \epsilon_1 \rangle \quad \text{and} \quad \PP(\epsilon_2) = \langle c, \epsilon_2 \rangle.
$$
\begin{figure}[ht]
	\begin{center}
		\includegraphics[width=0.5\textwidth]{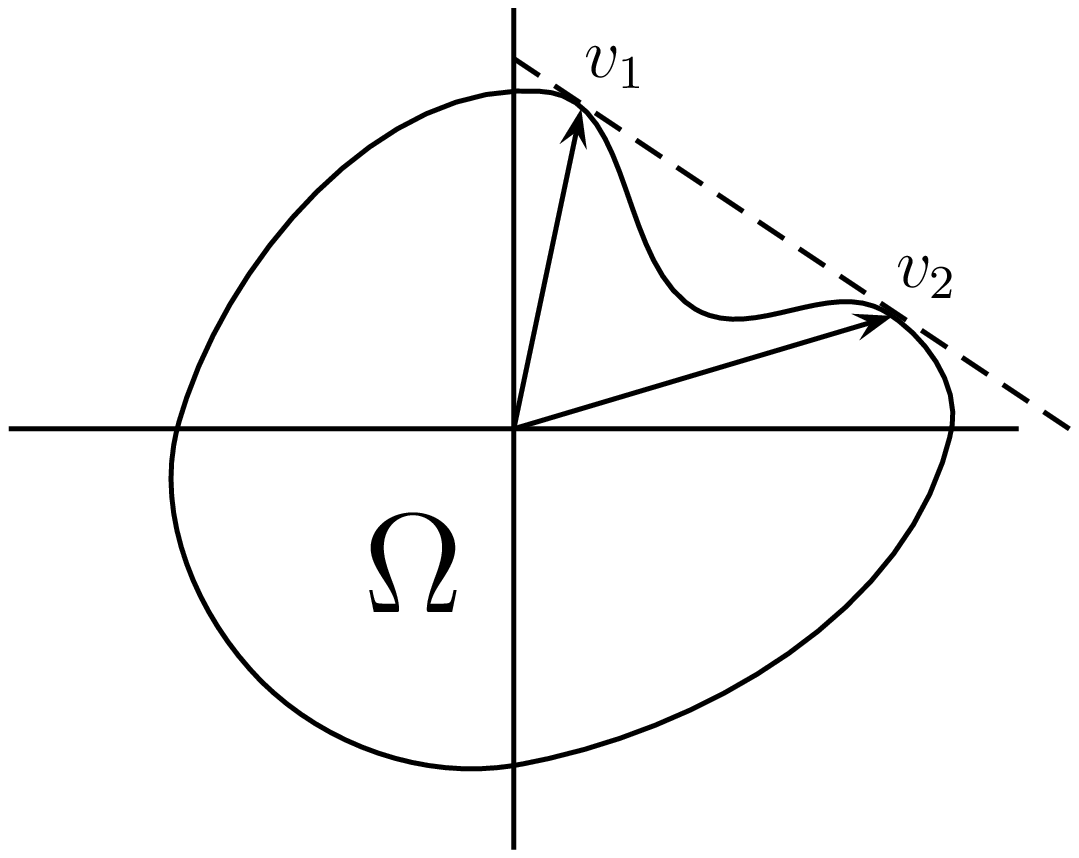}
		\caption{Bounded nonconvex $\Omega$: the case of one dimple.}
		\label{fig:Omega}
	\end{center}
\end{figure}
On the other hand, for each intermediate point $v=re,\, r \ge 0,\, |e|=1$ on the interval $(v_1,\, v_2)$ we have $r \PP(e) \ge 1$ and $r \langle c, e \rangle = 1$,
hence
$$
\PP(e) \ge \langle c, e \rangle \quad \text{for all} \, \ e \in S^1 \, \ \text{between} \, \ \epsilon_1 \, \ \text{and} \, \ \epsilon_2.
$$

Suppose that $\nu_B((\epsilon_1, \epsilon_2)) > 0$. Draw two support lines to $B$ with the outward normals $\epsilon_1$ and $\epsilon_2$, and let $\hat B$ be the new body bounded by these lines and the remaining part of the boundary of $B$; see Fig.~\ref{fig-Body}. In other words, the part of $\partial\Omega$ with the outward normals in $(\epsilon_1, \epsilon_2)$ is substituted by two segments with the outward normals $\epsilon_1$ and $\epsilon_2$. Let $\nu_{\epsilon_1 \epsilon_2}$ be the surface area measure of that part of $\partial\Omega$, and $\lambda_1>0$ and $\lambda_2>0$ be the lengths of the substituting segments.

\begin{figure}[ht]
	\begin{center}
		\includegraphics[width=0.5\textwidth]{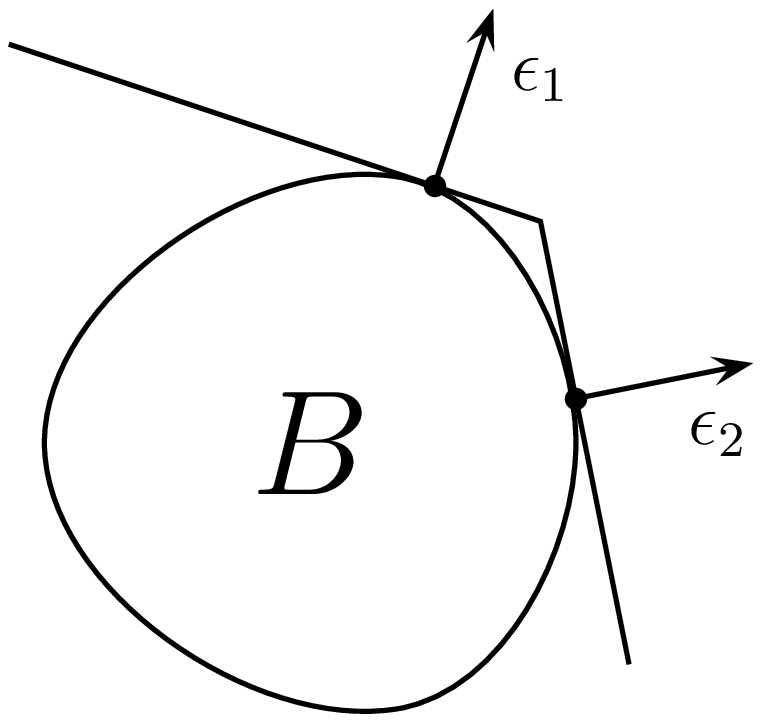}
		\caption{The bodies $B$ and $\tilde B$.}
		\label{fig-Body}
	\end{center}
\end{figure}

The area of $\hat B$ is greater than the area of $B$,\, $|\hat B| > |B|$. Let us show that $\FFF(\hat B) \le \FFF(B)$, and therefore, $\dfrac{\FFF(\hat B)}{\sqrt{|\hat B|}} \le \dfrac{\FFF(B)}{\sqrt{|B|}}$,
in a contradiction with optimality of $B$.

We have $\nu_{\hat B} - \nu_B = \lambda_1 \delta_{\epsilon_1} + \lambda_2 \delta_{\epsilon_2} - \nu_{\epsilon_1 \epsilon_2}$, and
$$
\int_{S^1} n\, d\nu_{B}(n)\ =\ 0 \ =\ \int_{S^1} n\, d\nu_{\hat B}(n) \ \implies \
$$ $$
\ \implies \ \int_{S^1} \langle c, n \rangle \, d(\nu_{\hat B} - \nu_B)(n)\, =\, \lambda_1 \langle c, \epsilon_1 \rangle + \lambda_2 \langle c, \epsilon_2 \rangle - \int_{S^1} \langle c, n \rangle \, d\nu_{\epsilon_1\epsilon_2}(n) = 0,
$$
hence
$$
\FFF(\hat B) - \FFF(B)\, =\, \FF(\nu_{\hat B}) - \FF(\nu_B)\, =\,
$$ $$
\, =\, \lambda_1 \PP(\epsilon_1) + \lambda_2 \PP(\epsilon_2) - \int_{S^1} \PP(n)\, d\nu_{\epsilon_1 \epsilon_2}(n)
$$ $$
\, \le \, \lambda_1 \langle c, \epsilon_1 \rangle + \lambda_2 \langle c, \epsilon_2 \rangle - \int_{S^1} \langle c, n \rangle\, d\nu_{\epsilon_1 \epsilon_2}(n)\, =\, 0.
$$
The contradiction implies that the support of $\nu_B$ does not contains points of $(\epsilon_1,\, \epsilon_2)$.
\vspace{2mm}

(i$_2$) Consider a maximal semi-infinite segment contained in $\partial\tilde\Omega$. Let $v_0 = r_0 \epsilon_0\, (r_0>0,\, |\epsilon_0| = 1)$ be its vertex, and assume that its director vector is $e_1$ (the case of $e_2$ is completely similar). The straight line containing this half-line is given by $\langle e_1^\perp, v \rangle = \langle e_1^\perp, v_0 \rangle$. Denoting $v=rn,\, r >0,\, |n|=1$, one can write down the equation of the straight line as follows
\begin{equation}\label{eq1}
r \langle e_1^\perp, n \rangle \ = \ \langle e_1^\perp, v_0 \rangle.
\end{equation}
Since $v_0 \in \partial\Omega$, we have $r_0 \PP(\epsilon_0) = 1$, hence
\begin{equation}\label{eq2}
\PP(\epsilon_0) = \dfrac{\langle e_1^\perp, \epsilon_0 \rangle}{\langle e_1^\perp, v_0 \rangle}
\end{equation}
 and $r \PP(n) \ge 1$; hence by \eqref{eq1} one obtains
\begin{equation}\label{eq3}
\PP(n) \ge \dfrac{\langle e_1^\perp, n \rangle}{\langle e_1^\perp, v_0 \rangle}
 \quad \text{for all} \, \ n \in S^1 \, \ \text{between} \, \ \epsilon_0 \, \ \text{and} \, \ e_1.
\end{equation}
Again, suppose that $\nu_B((\epsilon_0, e_1)) > 0$ and draw two support lines to $B$ with the outward normals $\epsilon_0$ and $e_1$. Let $\hat B$ be the new body bounded by these lines and the remaining part of the boundary of $B$. That is, the part of $\partial\Omega$ corresponding to the outward normals between $\epsilon_0$ and $e_1$ is substituted by two segments with the outward normals $\epsilon_0$ and $e_1$. Let $\nu_{\epsilon_0 e_1}$ be the surface area measure of this part of $\partial\Omega$, and $\lambda_0 > 0$ and $\lambda_1 > 0$ be the lengths of these segments.

We have $\nu_{\hat B} - \nu_B = \lambda_0 \delta_{\epsilon_0} + \lambda_1 \delta_{e_1} - \nu_{\epsilon_0 e_1}$, and
$$
\int_{S^1} n\, d\nu_{B}(n)\ =\ 0 \ =\ \int_{S^1} n\, d\nu_{\hat B}(n) \ \implies \
$$ $$
\ \implies \ \int_{S^1} \langle e_1^\perp, n \rangle \, d(\nu_{\hat B} - \nu_B)(n)\, =\, \lambda_0 \langle e_1^\perp, \epsilon_0 \rangle + \lambda_1 \langle e_1^\perp, e_1 \rangle - \int_{S^1} \langle e_1^\perp, n \rangle \, d\nu_{\epsilon_0 e_1}(n)
$$ $$
\, =\, \lambda_0 \langle e_1^\perp, \epsilon_0 \rangle\, -\, \int_{S^1} \langle e_1^\perp, n \rangle \, d\nu_{\epsilon_0 e_1}(n)
= 0,
$$
hence using \eqref{eq2} and \eqref{eq3},
$$
\FFF(\hat B) - \FFF(B)\, =\, \FF(\nu_{\hat B}) - \FF(\nu_B)\, =\,
$$ $$
\, =\, \lambda_0 \PP(\epsilon_0)\, +\, \lambda_1 \PP(e_1) - \int_{S^1} \PP(n)\, d\nu_{\epsilon_0 e_1}(n)
$$ $$
\, =\, \lambda_0 \PP(\epsilon_0)\, -\, \int_{S^1} \PP(n)\, d\nu_{\epsilon_0 e_1}(n)
$$ $$
\le \ \dfrac{1}{\langle e_1^\perp, v_0 \rangle}\, \left(
\lambda_0 \langle e_1^\perp, \epsilon_0 \rangle\, -\, \int_{S^1} \langle e_1^\perp, n \rangle\, d\nu_{\epsilon_0 \epsilon_2}(n) \right)\, =\, 0.
$$
On the other hand, the area of $\hat B$ is greater than the area of $B$. Thus, $B$ is not optimal.
\vspace{2mm}

(ii) Suppose that $\nu_B((e_1, e_2)) > 0$ and draw two support lines to $B$ with the outward normals $e_1$ and $e_2$. Let $\hat B$ be the new body bounded by these lines and the remaining part of the boundary of $B$. That is, the part of $\partial\Omega$ corresponding to the outward normals in $(e_1, e_2)$ is substituted by two segments with the outward normals $e_1$ and $e_2$. Let $\nu_{e_1 e_2}$ be the surface area measure of that part of $\partial\Omega$, and $\lambda_1>0$ and $\lambda_2>0$ be the lengths of the substituting segments.

The area of $\hat B$ is greater than the area of $B$,\, $|\hat B| > |B|$. On the other hand, $\FFF(\hat B) \le \FFF(B)$, in a contradiction with optimality of $B$. Indeed, since $\PP(e_1) = 0 = \PP(e_2)$, we have
$$
\FFF(\hat B) - \FFF(B)\, =\,
\FF(\nu_{\hat B}) - \FF(\nu_B)\,=\,
\lambda_1 \PP(e_1) + \lambda_2 \PP(e_2) - \int_{S^1} \PP(n)\, d\nu_{e_1 e_2}(n)\, \le \, 0.
$$
%\vspace{2mm}

(b) According to claim (a), problem \eqref{F} can be stated, in terms of measures, as follows: find a measure supported in $\KKK$ inducing a body of area $A$ that  minimizes the functional
$$ \int_{S^1} \PP(n)\, d\nu(n). $$
Since $\KKK = \KKK_{\Omega}$ coincides with $\KKK_{\tilde\Omega}$ and $\PP=\tilde\PP$ on $\KKK$, the problems for the functions $\PP$ and $\tilde\PP$ (equivalently, for the functionals $\FFF$ and $\tilde \FFF$) are identical, and therefore their sets of solutions coincide.
\end{proof}

\section{Application to problems of least resistance}\label{sec application}

{\bf 5.1.} \
Let $\PPP$ be determined by (see formula \eqref{P} in Section \ref{sec1})
\begin{equation}\label{S5P}
\PPP(\theta)\, = \, \dfrac{1}{2}\int_{-\pi/2}^{\pi/2} \cos^3 \varphi\, f(\theta-\varphi)\, d\varphi,
\end{equation}
where the factor $1/(2a)$ is substituted with $1/2$ for the further convenience. It is easy to check that $\PPP$ is a $C^3$ function, and
\begin{equation}\label{P+P''}
\PPP(\theta) + \PPP''(\theta)\, =\, - \int_{-\pi/2}^{\pi/2} \cos 3\varphi\, f(\theta-\varphi)\, d\varphi
\end{equation}
and
\begin{equation}\label{PP''}
\big(\PPP(\theta) + \PPP''(\theta) \big)'\, =\, 3\int_{-\pi/2}^{\pi/2} \sin 3\varphi\, f(\theta-\varphi)\, d\varphi.
\end{equation}

Assume that $f$ is an even monotone non-increasing function, which is positive in the interior of a non-degenerate interval $[-k,\, k]$ $(0 < k \le \pi)$ and zero outside this interval. (It may happen that $f>0$ everywhere, and so, $k=\pi$.)

Denote
$$
K \eqdef \int_{0}^{\pi/2} \cos 3\varphi\, f(\varphi)\, d\varphi.
$$

Determine the following conditions which mean, in a sense, smallness of oscillations.
\vspace{2mm}

{\bf C1.}\ Either $K > 0$, or $K=0$ and $k < \pi/2$.

{\bf C2.}\ $k < \pi/2$.

\begin{remark}
The values $K$ and $k$ are related to intensity of the body's oscillations, but in different aspects. The inequality $K > 0$ means that rotations by angles between 0 and $\pi/6$ (where $\cos 3\varphi > 0$), in a sense, dominate as compared with rotations by angles between $\pi/6$ and $\pi/2$ (where $\cos 3\varphi < 0$). The inequality $k < \pi/2$ means that the body cannot rotate by an angle greater than $\pi/2$. We will show (see Theorem \ref{t_cond} and Corollary \ref{coro}) that conditions C1 and C2 are equivalent to existence of singularities, respectively, at the front and back points of the optimal body' boundary).
\end{remark}

Below we will prove (see Theorem~\ref{t_cond} and Corollary~\ref{coro}) that the set $\Omega$ may have 4 possible shapes (see Fig.~\ref{fig-cor}). Recall that the polar set $\Omega^\circ$ is the optimal body (up to scaling). In the case (i), $\partial\Omega^\circ$ is $C^1$; in the case (ii), $\partial\Omega^\circ$ has one singular point at the top; in the case (iii), $\partial\Omega^\circ$ has one singular point at the bottom; in the case (iv), $\partial\Omega^\circ$ has two singular points --- one at the top and one at the bottom.

\begin{figure}[ht]
	\begin{center}
		\includegraphics[width=0.5\textwidth]{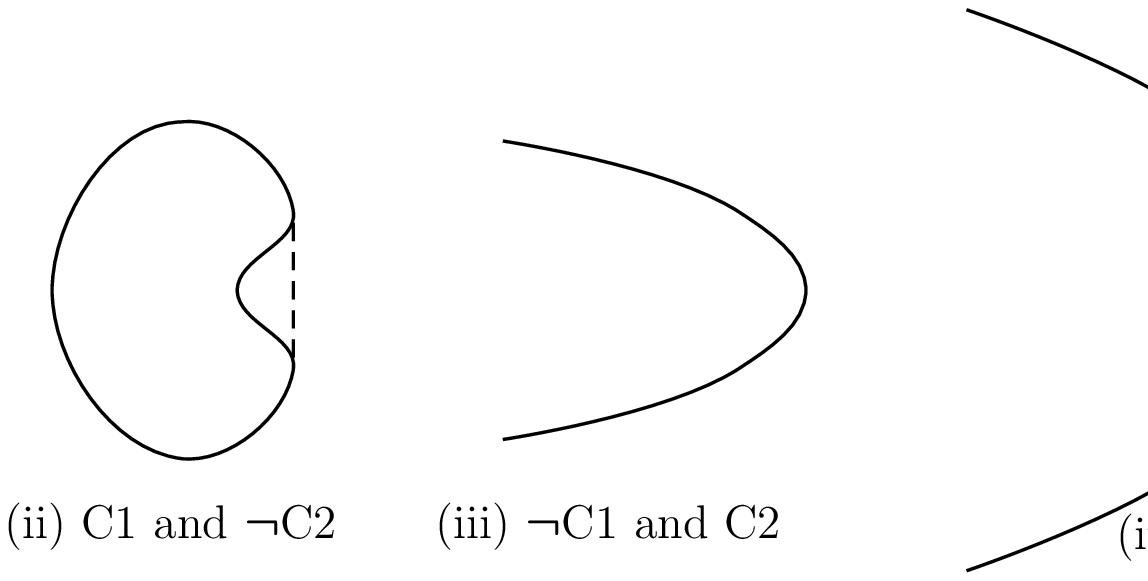}
	\caption{The set $\Omega$ in the cases (i), (ii), (iii), and (iv).}
		\label{fig-cor}
	\end{center}
\end{figure}

Recall that $\SSS = \mathbb{R}/2\pi\mathbb{Z}$ and denote $\hat\SSS \eqdef \SSS \setminus \{0,\, \pi,\, \pm\pi/6,\, \pm(k - \pi/6) \}$. That is, $\hat\SSS$ is $\SSS$ minus the set of (at most) 6 points.

\begin{theorem}\label{t_cond}
(i)\ If $K \le 0$ and $k \ge \pi/2$ (i.e., $\nneg$C1 and $\nneg$C2), then $\PPP>0$ on $\SSS \setminus \{\pi\}$ and $\PPP+\PPP''>0$ on $\hat\SSS$.

(ii)\ If $K > 0$ and $k \ge \pi/2$ (i.e., C1 and $\nneg$C2), then $\PPP>0$ on $\SSS \setminus \{\pi\}$, and $\PPP+\PPP'' \le 0$ in $[-\theta_*,\, \theta_*]$ and $\PPP+\PPP''> 0$ on $\hat\SSS \setminus [-\theta_*,\, \theta_*]$, for some $0 < \theta_* \le \pi/6$.

(iii)\ If $K < 0$ and $k < \pi/2$ (i.e., $\nneg$C1 and C2), then $\PPP>0$ on $(-\pi/2-k,\, \pi/2+k)$ and $\PPP+\PPP'' >0$ on $\hat\SSS \cap (-\pi/2-k,\, \pi/2+k)$, and $\PPP=0$ on $\SSS \setminus (-\pi/2-k,\, \pi/2+k)$.

(iv)\ If $K \ge 0$ and $k < \pi/2$ (i.e., C1 and C2), then $\PPP>0$ on $(-\pi/2-k,\, \pi/2+k)$,\, $\PPP=0$ on $\SSS \setminus (-\pi/2-k,\, \pi/2+k)$,\, $\PPP+\PPP''\le 0$ on $[-\theta_*,\, \theta_*]$ (for some $0 < \theta_* \le \pi/6$), and $\PPP+\PPP''> 0$ on $\hat\SSS \cap (-\pi/2-k,\, \pi/2+k) \setminus [-\theta_*,\, \theta_*]$.
\end{theorem}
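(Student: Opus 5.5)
The plan is to read off all four cases from the explicit formulas \eqref{S5P}, \eqref{P+P''}, \eqref{PP''}, using three facts: the support of $f$, the evenness of everything in $\theta$, and a monotone decomposition of $f$. Denote $G \eqdef \PPP+\PPP''$.

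\emph{Step 1: positivity of $\PPP$.} In \eqref{S5P} the weight $\cos^3\varphi$ is positive on $(-\pi/2,\pi/2)$, and $f(\theta-\cdot)$ is positive exactly on the open arc of length $2k$ centred at $\theta$. Hence $\PPP(\theta)>0$ iff this arc meets $(-\pi/2,\pi/2)$ in a set of positive measure, i.e. iff the circular distance from $\theta$ to $0$ is less than $\pi/2+k$. This gives $\PPP>0$ on $(-\pi/2-k,\pi/2+k)$ and $\PPP=0$ outside it when $k<\pi/2$. When $k\ge\pi/2$ it gives $\PPP>0$ on $\SSS\setminus\{\pi\}$. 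This settles all statements about $\PPP$ in (i)--(iv).

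\emph{Step 2: symmetry and the value at $0$.} Since $f$ is even, the substitution $\varphi\to-\varphi$ shows that $\PPP$ and $G$ are even, so it suffices to work on $[0,\pi]$. From \eqref{P+P''}, $G(0)=-2K$. So $K>0$ gives $G(0)<0$, $K<0$ gives $G(0)>0$, and $K=0$ gives $G(0)=0$. This is what separates (i),(iii) from (ii),(iv). On the arc where $\PPP\equiv0$ we have $G\equiv0$, so only the arc $|\theta|<\pi/2+k$ matters.

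\emph{Step 3: the sign structure of $G$ on $(0,\pi/2+k)$.} This is the main obstacle. I would use the layer-cake representation: since $f$ is even and nonincreasing in $|\psi|$,
$$
f=\int_0^k \chi_{[-s,s]}\,d\mu(s), \qquad \mu\ge0 .
$$
Then $G=\int G_s\,d\mu(s)$, where
$$
G_s(\theta)=-\int_{(\theta-s,\theta+s)\cap(-\pi/2,\pi/2)}\cos3\varphi\,d\varphi
$$
is an explicit combination of values of $\sin3\varphi$ at the endpoints $\theta\pm s$ and $\pm\pi/2$. Because sign changes need not survive mixing, I would work with $G'$ from \eqref{PP''} instead. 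Pairing $\varphi$ with $-\varphi$ gives
$$
G'(\theta)=3\int_0^{\pi/2}\sin3\varphi\,\bigl[f(\theta-\varphi)-f(\theta+\varphi)\bigr]\,d\varphi ,
$$
and for $\theta\in(0,\pi/2)$ the bracket is $\ge0$ (monotonicity of $f$, taking care of wrap-around near $\pi$).

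The target is the following shape, with the finitely many points $\pm\pi/6$, $\pm(k-\pi/6)$ excluded (these are where endpoint terms like $\sin3(\theta\pm k)$ vanish):
\begin{itemize}
\item $G'>0$ on $(0,\pi/6)$, so $G$ is strictly increasing there;
\item $G>0$ on $[\pi/6,\pi/2+k)$ minus the excluded points.
\end{itemize}
For the second item I would argue directly from \eqref{P+P''}. For $\theta\ge\pi/6$ the weight $f(\theta-\varphi)$ is concentrated nearer $\varphi=\theta$, i.e. on the region $(\pi/6,\pi/2)$ where $-\cos3\varphi>0$. I would compare it with the negative contribution from $|\varphi|<\pi/6$ by a reflection $\varphi\mapsto\pi/3-\varphi$ (a symmetry of $\cos3\varphi$), again using monotonicity of $f$. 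If this pairing fails, the fallback is to verify the inequality on each $G_s$ explicitly, since positivity is preserved under nonnegative mixtures.

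\emph{Step 4: assembling the cases.}
\begin{itemize}
\item $K<0$, or $K=0$ with $k\ge\pi/2$ (cases (i) and (iii)): $G(0)\ge0$ and $G$ increases on $(0,\pi/6)$, so $G>0$ on $(0,\pi/6)$. Together with Step 3 and evenness this gives $G>0$ on $\hat\SSS$, respectively on $\hat\SSS\cap(-\pi/2-k,\pi/2+k)$.
\item $K>0$, or $K=0$ with $k<\pi/2$ (cases (ii) and (iv)): $G(0)\le0$ and $G(\pi/6)>0$, and strict monotonicity on $(0,\pi/6)$ yields a unique $\theta_*\in(0,\pi/6]$ with $G\le0$ on $[-\theta_*,\theta_*]$ and $G>0$ beyond it.
\end{itemize}
The borderline $K=0$ with $k<\pi/2$ still needs $\theta_*>0$, i.e. $G<0$ just to the right of $0$. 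I expect $G'(0)=0$ by evenness, and I would need a second-order computation showing $G''(0)<0$ in that regime. I expect this to use $k<\pi/2$ essentially, and together with Step 3 it is the delicate point of the proof.
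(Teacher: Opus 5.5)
Your Steps 1--2 and the second bullet of Step 3 follow the paper. The reflection $\varphi\mapsto\pi/3-\varphi$ is the paper's pairing $\alpha\mapsto-\alpha$, with $\alpha=3\varphi-\pi/2$, applied to $-\int_{-\pi/6}^{\pi/2}\cos3\varphi\,f(\theta-\varphi)\,d\varphi$ once the nonnegative contribution of $(-\pi/2,-\pi/6)$ is dropped.

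\textbf{The gap is the first bullet of Step 3.} The claim $G'>0$ on $(0,\pi/6)$ is false. Your pairing formula for $G'$ cannot yield it, because $\sin3\varphi<0$ on $(\pi/3,\pi/2)$.
\begin{itemize}
\item For $f=\chi_{[-\pi/2,\pi/2]}$ (case (i)) one gets $G(\theta)=-\int_{\theta-\pi/2}^{\pi/2}\cos3\varphi\,d\varphi=\tfrac13(1+\cos3\theta)$ for $\theta\in[0,\pi]$. This is strictly decreasing on $(0,\pi/6)$.
\item If $k<\pi/2$ and $\theta\in[0,\pi/2-k]$, the window $[\theta-k,\theta+k]$ lies in $[-\pi/2,\pi/2]$. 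Expanding $\cos(3\theta-3\psi)$ then gives $G(\theta)=G(0)\cos3\theta$. So in case (iii) $G$ strictly decreases near $0$. When $K=0$ (e.g.\ uniform oscillations with $a=\pi/3$), $G\equiv0$ on $[0,\pi/2-k]$.
\end{itemize}
The same identity rules out your planned fix $G''(0)<0$ for the borderline case. There, $\theta_*>0$ holds because $G$ vanishes identically near $0$, which is why the theorem says $\le0$. It also gives $G(\pi/6)=0$ whenever $k\le\pi/3$, so the claim $G(\pi/6)>0$ in Step 4 fails too. That one is harmless, since $\theta_*=\pi/6$ is allowed.

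\textbf{The missing idea.} On $[0,\pi/6)$ the right monotone quantity is $G(\theta)/\cos3\theta$, not $G$. Its derivative is $3I(\theta)/\cos^2 3\theta$, where
$$I(\theta)=\sin3\theta\,G(\theta)+\tfrac13\cos3\theta\,G'(\theta)=-\int_{-\pi/2}^{\pi/2}\sin(3\theta-3\varphi)\,f(\theta-\varphi)\,d\varphi .$$
Substitute $\psi=\theta-\varphi$ and use the oddness of $f(\psi)\sin3\psi$. This reduces $I(\theta)$ to
$$\int_{-\pi/2}^{-\pi/2+\theta}\sin3\psi\,f(\psi)\,d\psi-\int_{\pi/2}^{\pi/2+\theta}\sin3\psi\,f(\psi)\,d\psi\ \ge\ 0 \quad\text{for } 0<\theta<\pi/6 .$$
The inequality is strict if $k\ge\pi/2$. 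It is an equality for $\theta\le\pi/2-k$ if $k<\pi/2$.

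From $G(\theta)\ge G(0)\cos3\theta$ you get:
\begin{itemize}
\item In cases (i) and (iii), $G>0$ on $(0,\pi/6)$. When $G(0)=0$ and $k\ge\pi/2$, this uses the strict monotonicity of the quotient.
\item In cases (ii) and (iv), the set where the quotient is $\le0$ is an initial segment $[0,\theta_*]$. It is nondegenerate because either $G(0)<0$ or $G\equiv0$ on $[0,\pi/2-k]$.
\end{itemize}
Without this device or an equivalent one, Step 4 fails in cases (i), (iii) and the $K=0$ part of (iv).
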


\begin{proof}
Note that the function $\PPP+\PPP''$ is continuous and even. The theorem follows from the following statements.
\vspace{2mm}

(a) If $k < \pi/2$ then $\PPP>0$ on the interval $(-\pi/2-k,\, \pi/2+k)$ and $\PPP=0$ outside this interval. If $k \ge \pi/2$ then $\PPP>0$ everywhere, except possibly at $\theta=\pi$ (when $k=\pi/2$).
\vspace{2mm}

(b) $\PPP + \PPP'' > 0$ on the interval $(\pi/6,\, \min\{\pi/2 + k,\, \pi \}) \setminus \{ k-\pi/6 \}$.
 %$(\pi/6,\, \pi/2 + k) \setminus \{ k-\pi/6 \}$, if $k < \pi/2$, and on $(\pi/6,\, \pi) \setminus \{ k-\pi/6 \}$, otherwise.
\vspace{2mm}

(c) The function $\dfrac{1}{\cos 3\theta}\,(\PPP(\theta) + P''(\theta))$ is non-decreasing on $[0,\, \pi/6)$. Additionally, if $k \ge \pi/2$, it is strictly increasing, and if $k < \pi/2$, it is constant on a segment $[0,\, \theta_0]$ with $\theta_0>0$ sufficiently small.
\vspace{2mm}

Let us prove these statements.\vspace{2mm}

(a) Using that $f$ is supported on $[-k,\, k]$, one sees that the integrand in \eqref{S5P} is positive when $\varphi$ is in the interval $(\theta-k,\, \theta+k) \cap (-\pi/2,\, \pi/2)$ and zero when $\varphi$ lies outside the closure of this interval. For $k < \pi/2$, this interval is nonempty, and equivalently $\PPP>0$, iff $\varphi \in (-\pi/2-k,\, \pi/2+k)$. Similarly, for $k=\pi/2$,\, $\PPP>0$ on $\SSS \setminus \{ \pi \}$, and for $k> \pi/2$,\, $\PPP>0$ everywhere.
\vspace{2mm}

(b) Take $\theta \in (\pi/6,\, \min\{\pi/2 + k,\, \pi \}) \setminus \{ k-\pi/6 \}$. Using that $\cos 3\theta$ is negative on $(-\pi/2,\, -\pi/6)$ and making the change of variable $\alpha = 3\varphi - \pi/2$, we get
\begin{equation}\label{eqn1}
\PPP(\theta) + \PPP''(\theta)\ =\
-\int_{-\pi/2}^{-\pi/6} \cos 3\varphi\, f(\theta-\varphi)\, d\varphi
-\int_{-\pi/6}^{\pi/2} \cos 3\varphi\, f(\theta-\varphi)\, d\varphi
 \end{equation}
 \begin{equation}\label{eqn2}
 \ge\
-\int_{-\pi/6}^{\pi/2} \cos 3\varphi\, f(\theta-\varphi)\, d\varphi\, =\,
\dfrac{1}{3}\int_{-\pi}^{\pi} \sin\alpha\ f\Big(\theta - \frac{\pi}{6} - \dfrac{\alpha}{3} \Big)\, d\alpha
\end{equation}
 \begin{equation}\label{eqn3}
\, =\,
\dfrac{2}{3}\int_{0}^{\pi} \sin\alpha
\Big[ f\Big(\theta - \frac{\pi}{6} - \dfrac{\alpha}{3} \Big) - f\Big(\theta - \frac{\pi}{6} + \dfrac{\alpha}{3} \Big)
\Big] d\alpha.
\end{equation}
Using that $0 \le \Big|\theta - \dfrac{\pi}{6} - \dfrac{\alpha}{3}\Big| \le \pi - \Big|\dfrac{7\pi}{6} - \theta - \dfrac{\alpha}{3}\Big| \le \pi$, we have
$$
f\Big(\theta - \dfrac{\pi}{6} - \dfrac{\alpha}{3} \Big)\ =\ f\Big(\Big|\theta - \dfrac{\pi}{6} - \dfrac{\alpha}{3} \Big|\Big)\ \ge\
f\Big(\pi - \Big|\dfrac{7\pi}{6} - \theta - \dfrac{\alpha}{3}\Big|\Big) \ =\ f\Big(\theta - \dfrac{\pi}{6} + \dfrac{\alpha}{3} \Big),$$
since $f(\beta)=f(-\beta)$ and $f(\pi-\beta)=f(\pi+\beta)$. Therefore, $\PPP(\theta) + \PPP''(\theta) \ge 0$ for all $\theta$.

Now we examine carefully the cases when $\PPP(\theta) + \PPP''(\theta) > 0$. Note that $f(\theta-\varphi)>0$ if $\varphi\in(\theta-k,\theta+k)$. If either $\theta - k<-\pi/6$ or $\theta + k > 3\pi/2$, then the first integral in the right hand site of \eqref{eqn1} is negative, and therefore, the inequality in \eqref{eqn2} is strict and hence $\PPP(\theta)+\PPP''(\theta)>0$.

The opposite case is $\theta -k \ge - \pi/6$ and $\theta + k \le 3\pi/2$. Note that $\theta-k\ne-\pi/6$ by assumptions in (b). Hence $\theta -k > - \pi/6$ and $\theta + k \le 3\pi/2$. We claim that the interval
\begin{equation}\label{intervals}
3\Big( \Big|k-\theta+\dfrac{\pi}{6}\Big|, \ \min\big\{ k+\theta-\dfrac{\pi}{6},\, \frac{13\pi}{6}-\theta-k \big\}\Big) \cap [0,\, \pi]
\end{equation}
is nonempty, and the first term in the square brackets in \eqref{eqn3} is positive for the values $\alpha$ in the interval~\eqref{intervals}, while the second term is zero, and so, the integral in \eqref{eqn3} is positive.

Indeed, one easily checks that the former interval in \eqref{intervals} is empty only for $k=\pi$, which contradicts the considered case. So the former interval in \eqref{intervals} is nonempty. From the inequalities $\theta < \pi/2+k$ (condition (b)) and $k< \theta+\pi/6$ one derives that the left endpoint of the former interval lies in $[0,\, \pi)$, and therefore, the set in \eqref{intervals} is nonempty. Each $\alpha$ in the interval \eqref{intervals} satisfies $-k+\theta-\pi/6<\alpha/3<k+\theta-\pi/6\ \implies\ \theta-\pi/6-\alpha/3 \in (-k,\, k)$, hence the first term in the square brackets in \eqref{eqn3} is positive. Finally, each $\alpha$ in the interval \eqref{intervals} satisfies $k-\theta+\pi/6 <\alpha/3< 13\pi/6-\theta-k\ \implies\ \theta-\pi/6+\alpha/3 \in (k,\, 2\pi-k)$, hence the second term in the square brackets in \eqref{eqn3} is zero.

It follows that $\PPP(\theta) + \PPP''(\theta) > 0$, except possibly for a single point $\theta = k - \pi/6$.
	\vspace{2mm}

(c) By \eqref{P+P''} and \eqref{PP''} we have
$$
I(\theta) \eqdef -\int_{-\pi/2}^{\pi/2} \sin(3\theta-3\varphi)\, f(\theta-\varphi)\, d\varphi
$$ $$
=\ -\sin 3\theta \int_{-\pi/2}^{\pi/2} \cos 3\varphi\, f(\theta-\varphi)\, d\varphi\
+\ \cos 3\theta \int_{-\pi/2}^{\pi/2} \sin 3\varphi\, f(\theta-\varphi)\, d\varphi
$$ $$ =\ \sin 3\theta\, \big( \PPP(\theta) + \PPP''(\theta) \big)\
+\ \dfrac{1}{3} \cos 3\theta\, \big( \PPP(\theta) + \PPP''(\theta) \big)'.
$$
On the other hand, making the change of variable $\psi = \theta-\varphi$ and using that $f$ is even and nonnegative, for $0 < \theta < \pi/6$ we obtain (note that function $f(\psi)\sin3\psi$ is odd)
$$
I(\theta)\ =\ -\int_{-\pi/2+\theta}^{\pi/2+\theta} \sin 3\psi\, f(\psi)\, d\psi $$ $$
=	 \int_{-\pi/2}^{-\pi/2+\theta} \sin 3\psi\, f(\psi)\, d\psi
\ -\ \int_{\pi/2}^{\pi/2+\theta} \sin 3\psi\, f(\psi)\, d\psi\ \ge\ 0.
$$
If $k \ge \pi/2$, the last inequality is strict, and if $k < \pi/2$, it becomes equality for $0\le \theta \le \pi/2-k$.

So, we have
$$
\dfrac{d}{d\theta}\, \dfrac{\PPP(\theta) + \PPP''(\theta)}{\cos 3\theta}\ =\ 3\ \dfrac{\sin 3\theta\, \big( \PPP(\theta) + \PPP''(\theta) \big)\
+\ \dfrac{1}{3} \cos 3\theta\, \big( \PPP(\theta) + \PPP''(\theta) \big)'}{\cos^2 3\theta}\ $$ $$ =\ \dfrac{3I(\theta)}{\cos^2 3\theta}\ \ge \ 0.
$$
If $k \ge \pi/2$, this inequality is strict, and if $k < \pi/2$, it becomes equality for $\theta\ge 0$ sufficiently small.
\vspace{2mm}

Let us now prove the theorem.
\vspace{2mm}

(i) We have $\PPP(0)+\PPP''(0) =-K \ge 0$. Since $k \ge \pi/2$, by (c) the function $\dfrac{1}{\cos 3\theta}(\PPP(\theta)+\PPP''(\theta))$ is strictly increasing on $(0,\,\pi/6)$. According to (b), $\PPP+\PPP'' >0$ on $(\pi/6,\,\pi) \setminus \{ k-\pi/6 \}$. Hence $\PPP+\PPP'' >0$ everywhere, except possibly at $\theta = 0,\, \pm\pi/6,\, \pi,\, \pm(k-\pi/6)$. Here and in item (ii), by (a), $\PPP>0$ everywhere, except possibly at $\theta=\pi$ (when $k=\pi/2$).

(ii) We have $\PPP(0)+\PPP''(0) =-K < 0$ and, by (c), $\dfrac{1}{\cos 3\theta}(\PPP(\theta)+\PPP''(\theta))$ is strictly increasing on $(0,\,\pi/6)$. According to (b), $\PPP+\PPP'' >0$ on $(\pi/6,\,\pi) \setminus \{ k-\pi/6 \}$. It follows that $\PPP+\PPP'' \le 0$ on a certain interval $[0,\, \theta_*]$,\, $0<\theta_*\le\pi/6$, and $\PPP+\PPP'' > 0$ on $(\theta_*,\, \pi) \setminus \{ \pi/6,\, k-\pi/6 \}$, and the same inequalities are true on the symmetric intervals $[-\theta_*,\, 0]$ and $(-\pi,\, -\theta_*) \setminus \{ -\pi/6,\, -k+\pi/6 \}$. Thus, $\PPP+\PPP'' \le 0$ on $[-\theta_*,\, \theta_*]$ and $\PPP+\PPP'' >0$ on $\hat\SSS \setminus [-\theta_*,\, \theta_*].$

(iii) We have $\PPP(0)+\PPP''(0) =-K > 0$. Since $k < \pi/2$, by (c), $\dfrac{1}{\cos 3\theta}(\PPP(\theta)+\PPP''(\theta))$ is non-decreasing on $(0,\,\pi/6)$. According to (b), $\PPP+\PPP'' >0$ on $(\pi/6,\,\pi/2+k)\setminus \{ k-\pi/6 \}$.  It follows that $\PPP+\PPP'' >0$ on $(-\pi/2-k,\, \pi/2+k) \setminus \{ \pm\pi/6,\, \pm(k-\pi/6) \}$. Here and in item (iv), by (a), $\PPP>0$ on $(-\pi/2-k,\, \pi/2+k)$ and $\PPP=0$ on $\SSS \setminus (-\pi/2-k,\, \pi/2+k)$.

(iv) We have $\PPP(0) + \PPP''(0) = -K\le 0$. Since $k < \pi/2$, by (c), $\PPP(\theta) + \PPP''(\theta) = -K\cos 3\theta \le 0$ on a certain segment $[0,\,\theta_0]$, $0<\theta_0<\pi/6$. By (c) and (b), $\PPP+\PPP'' \le 0$ on $[0,\, \theta_*]$, where $\theta_0 \le \theta_* \le \pi/6$,\, $\PPP+\PPP'' >0$ on $(\theta_*,\, \pi/2+k)\setminus \{ \pi/6,\, k-\pi/6 \}$ (and the same inequalities hold true on the symmetric segments). Thus, the last item of Theorem \ref{t_cond} is proved.
\end{proof}

The following important statement is a simple consequence of Theorem \ref{t_cond}.

\begin{corollary}\label{coro}
The boundary of an optimal body has a singular point at the top, if condition C1 is satisfied, and does not have otherwise, and it has a singular point at the bottom if condition C2 is satisfied, and does not have otherwise. The rest of the boundary is $C^1$-regular.
\end{corollary}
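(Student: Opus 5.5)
We plan to derive the corollary from Theorem~\ref{t_cond} combined with Theorems~\ref{t2} and~\ref{thm: main}. By Theorem~\ref{t2}, an optimal body for $\PPP$ coincides with an optimal body for the convexified function $\tilde\PPP$. Its surface area measure is supported in $\KKK$. By Theorem~\ref{thm: main} applied to $\tilde\Omega=\conv\Omega$, this body is, up to rotation, dilation and translation, the polar set $\tilde\Omega^\circ$. Its boundary is parametrized by $\theta\mapsto(\tilde\PPP+i\tilde\PPP')e^{i\theta}$. In this parametrization, the surface area measure is $(\tilde\PPP+\tilde\PPP'')\,d\theta$, understood as a measure.

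The dictionary we will use is the standard polar duality between singular points and flat pieces. A singular (corner) point of $\partial B$ corresponds to an arc of normals of positive length carrying zero surface measure. For our $B$, this means an arc of $S^1\setminus\KKK$, that is, a segment of $\partial\tilde\Omega$: either a dimple of $\Omega$ bridged by convexification, or the unbounded cone spanned by the zero set of $\PPP$. Conversely, $\partial B$ is $C^1$ near the points whose normals lie in the interior of $\KKK$. Here each point of $\partial B$ has a unique normal.

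We then read off $\KKK$ from the four cases of Theorem~\ref{t_cond}. In the top direction $\theta\approx0$, condition C1 gives $\PPP+\PPP''\le0$ on $[-\theta_*,\theta_*]$ with $\theta_*>0$, so $\partial\Omega$ is nonconvex (or flat) near direction $0$. Because $\PPP+\PPP''>0$ on both sides, convexification replaces this dimple by a single segment with endpoints in directions $\pm\theta_1$, where $\theta_1\ge\theta_*>0$. The normals in $(-\theta_1,\theta_1)$ are then missing from $\KKK$, which yields a corner at the top. If C1 fails, then $\PPP+\PPP''>0$ except at finitely many isolated points, so $\Omega$ is strictly convex near direction $0$ and $\KKK$ contains a neighborhood of $0$. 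A zero of the density at an isolated point does not create a gap in the support, hence there is no corner at the top. In the bottom direction $\theta=\pi$, C2 gives $\PPP=0$ on the arc $[\pi/2+k,3\pi/2-k]$ of positive length, so $\Omega$ is unbounded there. This arc, possibly enlarged by convexification, lies in the complement of $\KKK$, which gives a corner at the bottom. If C2 fails, then $\PPP>0$ except possibly at $\pi$, and $\PPP+\PPP''>0$ near $\pi$ away from isolated points. Hence $\KKK$ is a full neighborhood of $\pi$, and there is no corner at the bottom.

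At every other direction, $\PPP+\PPP''>0$ off a finite set, so $S^1\setminus\KKK$ has no further arcs and the rest of $\partial B$ is $C^1$. The main obstacle is making this rigorous. We must show that the gap in $\KKK$ near $0$ under C1 is exactly one arc symmetric about $0$, using the evenness of $\PPP$ and the sign structure of $\PPP+\PPP''$. We must also handle the borderline cases, $k=\pi/2$ (where $\PPP$ vanishes only at $\pi$) and $K=0$ with $k<\pi/2$. For these we would check directly that a single zero of $\PPP$, or an isolated zero of $\PPP+\PPP''$, does not produce a segment in $\partial\tilde\Omega$.
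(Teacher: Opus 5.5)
Your proposal is correct and follows essentially the paper's route. Both arguments read the shape of $\conv\Omega$ off Theorem~\ref{t_cond}: a bridging segment at the front under C1, a cone of rays at the back under C2, strict convexity elsewhere. Both then pass to corners of the polar body, and your formulation via gaps of $\KKK$ in the support of $(\tilde{\PPP}+\tilde{\PPP}'')\,d\theta$ is equivalent to the paper's turning argument $r'\times r''=(\PPP+\PPP'')/\PPP^3$.

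One small slip in your last paragraph concerns the borderline cases. The case where you must check that an isolated zero of $\PPP+\PPP''$ creates no segment is $K=0$, $k\ge\pi/2$, where the zero is at $\theta=0$. For $K=0$, $k<\pi/2$ the situation is the opposite: $\PPP+\PPP''\equiv0$ on a neighbourhood $[-\theta_0,\theta_0]$ of $0$. This gives a genuine flat piece of $\partial\Omega$ and hence the front corner, exactly as you used earlier.
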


\begin{proof}
	
The boundary of $\Omega$ can be parameterized by
$$
r(\theta) = \frac{1}{\PPP(\theta)}\, e_{\theta}, \qquad \theta\in \big(-\min\{k+\pi/2, \pi\},\ \min\{k+\pi/2, \pi \} \big).
$$
The remaining values of $\theta$ correspond to zero values of $\PPP$. It is straightforward to calculate
$$
r' \times r''\ =\ \frac{\PPP +\PPP''}{\PPP^3};
$$
that is, the curve goes counterclockwise about the origin, turns left when $\PPP+ \PPP'' >0$, turns right when $\PPP+ \PPP''<0$, and goes straight when $\PPP+ \PPP''=0$.

(i) If both conditions C1 and C2 are not satisfied then $\PPP+\PPP''>0$, except possibly for several (up to 6) isolated points, hence $\Omega$ is strictly convex; see Fig.~\ref{fig-cor}\,(i). If $k > \pi/2$ then $\Omega$ is bounded, and if $k=\pi/2$ then it contains a single ray corresponding to $\theta=\pi$. It follows that the boundary of the corresponding optimal body is regular.

(ii) If only condition C1 is satisfied then the curve initially turns left, then on $[-\theta_*, \theta_*]$ it turns right or goes straight, and then again turns left. It follows that $\Omega$ is not convex. The boundary of $\tilde\Omega$ contains one line interval in its front part (shown as a dashed line in Fig.~\ref{fig-cor}\,(ii)) with the angular width greater than or equal to $2\theta_*.$  Again, if $k > \pi/2$ then $\Omega$ is bounded, and if $k=\pi/2$ then it contains a single ray corresponding to $\theta=\pi$. The corresponding optimal body has a unique singular point at the top of its boundary, and the angle at the top is smaller than or equal to  $\pi-2\theta_*$.

(iii) If only condition C2 is satisfied then $r$ is defined on $(-\pi/2-k,\, \pi/2+k)$ and $\PPP+\PPP''>0$ on its domain, except for at most 6 points. It follows that $\Omega$ is convex and unbounded (see Fig.~\ref{fig-cor}\,(iii)), moreover it contains rays, with the vertex at the origin, corresponding to the angles in $[\pi/2+k,\, 3\pi/2-k]$. Hence the optimal body has a unique singularity at the bottom of its boundary, and the angle at that point equals $2k.$

(iv) If both C1 and C2 are satisfied, then again, the curve determining the boundary turns left, then turns right or goes straight, and then again turns left. $\Omega$ is nonconvex and unbounded and, moreover, contains rays corresponding to the angles in $[\pi/2+k,\, 3\pi/2-k]$; see Fig.~\ref{fig-cor}\,(iv). The boundary of $\tilde\Omega$ contains one line interval shown dashed in the figure. The corresponding optimal body has two singularities, at the top and at the bottom, the angle at the top is.

\end{proof}

\begin{remark}
Note that the boundary of an optimal body is $C^1$ except for at most 2 singular points, and is $C^2$, except for up to 6 additional points, where it is $C^1\setminus C^2$ regular. Indeed, the boundary of the polar set $\Omega^\circ$ is $C^2$ on a segment iff curvature of $\partial(\conv\Omega)$ is positive on the corresponding segment, and the latter is $\PPP^2(\PPP+\PPP'') (\PPP^2+\PPP'^2)^{-3/2}$ (i.e.\ these additional points are determined by condition $\PPP+\PPP''=0$).
\end{remark}

{\bf 5.2.} \ Now consider the application of the theory to the special case when a body makes uniform rotations with the amplitude $a$,\, $0 < a \le \pi$. We calculate the functions $\PPP=\PPP_a$ and $Q=Q_a$.  Here we use the formula
$$
P(\theta)\ =\ \int\limits_{[\theta-a,\theta+a]\cap[-\pi/2,\pi/2]} \cos^3 \varphi\, d\varphi,
$$
which coincides with \eqref{Punif} up to the factor $1/(2a)$.
Since $\PPP$ is even, it suffices to calculate its values for $\theta \in [0,\, \pi].$

Let us consider separately 5 regions on the square $0 \le a \le \pi$,\, $0 \le \theta \le \pi$ in the $(a, \theta)$-plane; see Fig.~\ref{fig:square}.

\begin{figure}[ht]
	\begin{center}
		\includegraphics[width=0.5\textwidth]{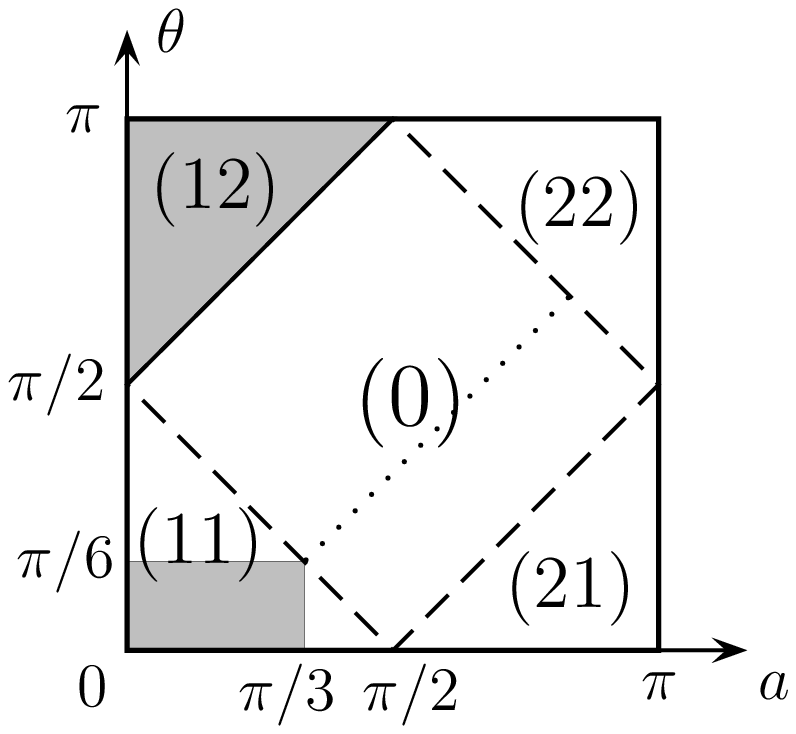}
\caption{Five regions. The rectangular shaded region corresponds to $\PPP+\PPP''\le0$. The triangular shaded region corresponds to $\PPP=0.$ The dotted diagonal line corresponds to $\PPP+\PPP''=0$.}
\label{fig:square}
	\end{center}
\end{figure}

(11)~~$0 \le \theta \le \pi/2 - a$. Here we have $[\theta-a,\, \theta+a]\cap[-\pi/2,\, \pi/2]=[\theta-a,\, \theta+a]$,
$$
\PPP(\theta)\ = \ \frac32\ \sin a\cos\theta + \frac16\sin(3a)\cos(3\theta)
$$ and $$
Q(\xi, \eta)\ =\ \mu_\Omega(\eta,-\xi)\ = \ 2\sin a\ \frac{\eta\, \big[  \xi^2\sin^2 a + \eta^2(1 - \frac{1}{3}\sin^2 a) \big]}{\xi^2 + \eta^2}.
$$
Further,
$$\PPP(\theta) + \PPP''(\theta)\ =\ -\frac{4}{3}\, \sin(3a)\cos(3\theta),$$
hence $$
\PPP+\PPP''\le 0 \qquad \Longleftrightarrow \qquad 0\le a\le\pi/3 \ \ \text{and} \ \ 0\le\theta\le\pi/6.
$$
If $a=\pi/3$,\, $\PPP+\PPP''=0$ on $[0,\, \pi/6]$ and $\PPP+\PPP''>0$ on $(\pi/6,\, \pi/2-a]$. If $0<a<\pi/3$,\, $\PPP+\PPP''<0$ on $[0,\, \pi/6)$ and $\PPP+\PPP''>0$ on $(\pi/6,\, \pi/2-a]$. The corresponding set $\Omega$ looks something like the ones shown in Fig.~\ref{fig:a} for the values $0<a<\pi/3$ and $a=\pi/3$.
%%%%%%%%%%%%%%
\begin{figure}[ht]
	\begin{center}
		\includegraphics[width=0.5\textwidth]{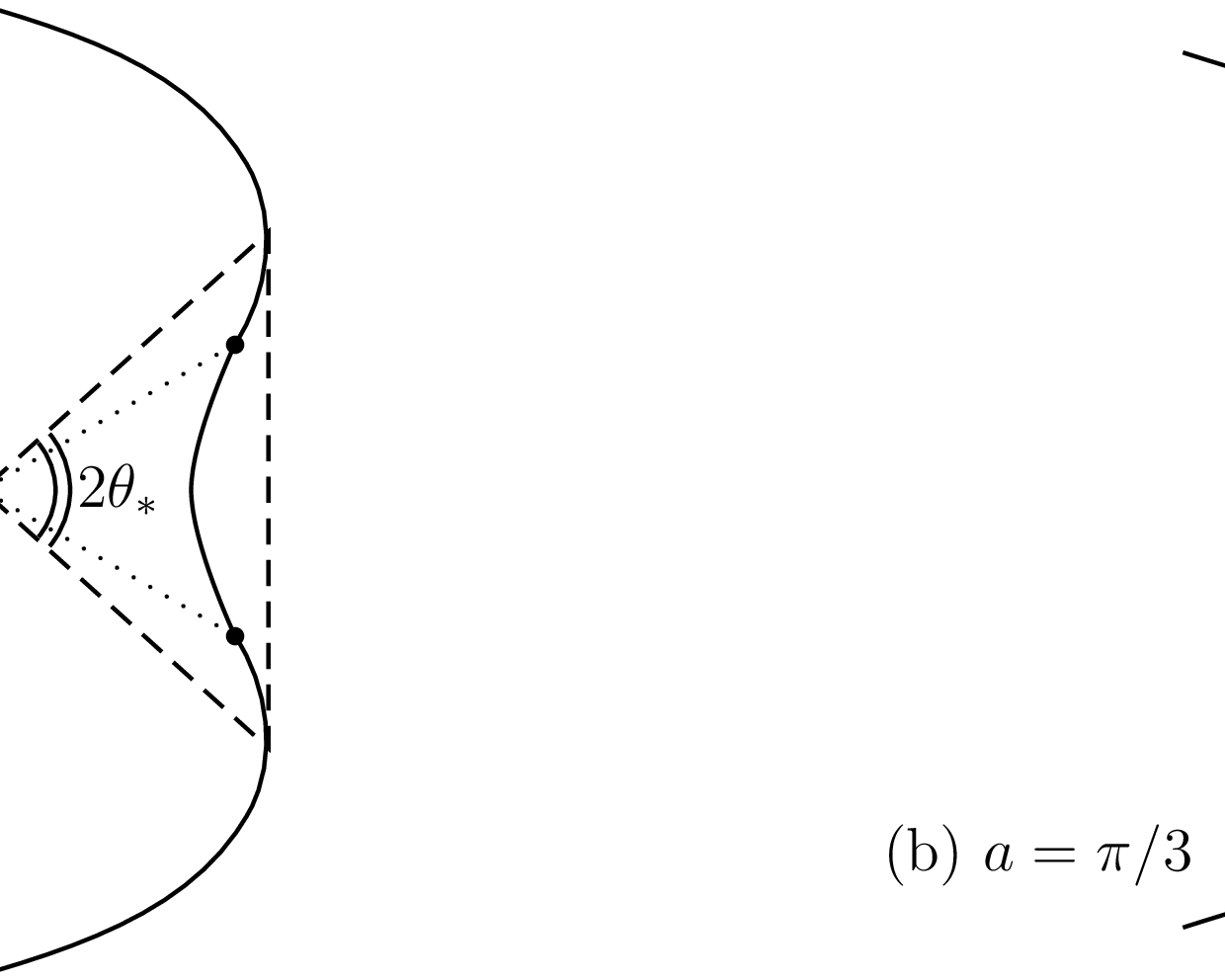}
\caption{The set $\Omega$ in the case of uniform rotation looks similar to those shown in Fig.~(a) for $0<a<\pi/6$ and in Fig.~(b) for $a=\pi/3$. The dot is at the origin.}
\label{fig:a}
	\end{center}
\end{figure}
%%%%%%%%%%%%%%
Hence for $a=\pi/3$, the tangent cone at the front part of the body's boundary is $2\pi/3$, and for $0<a<\pi/3$, the tangent cone is $\pi-2\theta_*<2\pi/3$, where $\theta_*$ (obviously depending on $a$) ia as in Fig.~\ref{fig:a}\,(a). If $\pi/6<a<\pi/2$, there is no singularity at the front.

(12)~~$\pi/2 + a \le \theta \le \pi$. Here we have $[\theta-a,\, \theta+a]\cap[-\pi/2,\, \pi/2]=\emptyset$, hence
$$
\PPP(\theta)\ =\ 0 \quad \text{and} \quad Q(\xi, \eta)\ = \ 0.
$$

(21)~~$0 \le \theta \le a - \pi/2$. Here we have $[\theta-a,\, \theta+a]\cap[-\pi/2,\, \pi/2]=[-\pi/2,\, \pi/2]$, hence
$$
\PPP(\theta)\ =\ \frac{4}{3} \quad \text{and} \quad Q(\xi, \eta)\ = \ \frac{4}{3}\, \sqrt{\xi^2 + \eta^2}.
$$

(22)~~$3\pi/2 - a \le \theta \le \pi$. Here we have $[\theta-a,\, \theta+a]\cap[-\pi/2,\, \pi/2]=[\theta-a,\, \theta+a]\setminus[\pi/2,\, 3\pi/2]$,
$$
\PPP(\theta)\ =\ \frac{4}{3}\ +\ \frac32\ \sin a\cos\theta + \frac16\sin(3a)\cos(3\theta)
$$ and $$
Q(\xi, \eta)\ = \ \frac{4}{3}\, \sqrt{\xi^2 + \eta^2}\ +\ 2\sin a\ \frac{\eta\, \big[ \xi^2\sin^2 a + \eta^2 (1 - \frac{1}{3}\sin^2 a)\big]}{\xi^2 + \eta^2}.
$$
$$\PPP(\theta)+\PPP''(\theta) = \frac{4}{3}\,\big[1-\sin(3a)\cos(3\theta)\big],$$
hence $\PPP+\PPP''>0$ everywhere, except for the points $a=\pi/2,\, \theta=\pi$ and $a=5\pi/6,\, \theta=2\pi/3.$

(0)~~$|\theta-\pi/2|+|a-\pi/2|\le \pi/2$. Here we have $[\theta-a,\, \theta+a]\cap[-\pi/2,\, \pi/2]=[\theta-a,\, \pi/2]$,
$$
\PPP(\theta)\ =\ \frac{2}{3}\, -\, \sin(\theta-a)\, +\, \frac{1}{3} \sin^3(\theta-a)
$$ and $$
Q(\xi, \eta)\ = \ \frac{2}{3}\, \sqrt{\xi^2 + \eta^2}\, -\, \cos a\, |\xi| + \sin a\, \eta\, -\, \frac{1}{3}\, \frac{(-\cos a\, |\xi| + \sin a\, \eta)^3}{\xi^2 + \eta^2}.
$$
$$\PPP(\theta)+\PPP''(\theta)= \frac{2}{3}\,\big[1+\sin(3\theta-3a)\big],$$
hence $\PPP+\PPP''>0$ everywhere, except for the lines $\theta-a=\pi/2$ and $\theta-a=-\pi/6.$
\vspace{2mm}

Summarizing, the front point of the body's boundary is singular iff $0<a\le\pi/3$ (see case (11)), and the bottom point is singular iff $0<a<\pi/2$ (see case (12)). The tangent cone at the front is smaller than $2\pi/3$ when $a<\pi/3$ and reaches this value when $a=\pi/3$. The tangent cone at the back part equals $2a.$

Consider several particular cases.
\vspace{2mm}

(i) In the limit of pure translation $a \to 0$ we have the following asymptotics of $Q=Q_a$,
$$
\frac{1}{2a}\, Q_a(\xi,\eta)\ \to \ \left\{
\begin{array}{ll}
	\frac{\eta^3}{\xi^2 + \eta^2}, & \text{if } \ \eta \ge 0\,;\\
	0, & \text{if } \ \eta < 0\,.
\end{array}
\right.
$$
The length of the optimal body goes to infinity, and the width goes to zero. An approximate shape of the optimal body for $a$ small is shown in Fig.~\ref{fig:Shapes}\,(i).
\vspace{2mm}
 %\rput(7,-1.3){\psarc(0,-20.35){21}{80}{100}\psarc(0,21){21}{-100}{-80}}\vspace{20mm}

(ii) $a = \pi/3$.
$$ Q(\xi,\eta)\ =\
\left\{
\begin{array}{ll}
	\frac{3\sqrt{3}}{4}\, \eta, & \text{if } \ \eta \ge \sqrt{3}\, |\xi|\,;\\
	\frac{2}{3}\, \sqrt{\xi^2 + \eta^2} - \frac{1}{3}\, \frac{|\xi|^3}{\xi^2 + \eta^2} + \frac{3\sqrt{3}\, \eta - |\xi|}{8}, & \text{if } \ -\sqrt{3}\, |\xi| \le \eta \le \sqrt{3}\, |\xi|\,;\\
	0, & \text{if } \ \eta \le -\sqrt{3}\, |\xi|\,.
\end{array}
\right.
$$
optimal body is bounded by the union of two curves
%$$\left( \frac{3\sqrt{3}}{8} + \frac{2}{3}\, \sin\varphi + \frac{2}{3}\, \cos^3 \varphi \sin\varphi, \ \ \pm\Big[ -\frac{1}{8} + \frac{2}{3}\, \cos\varphi - \frac{1}{3}\, \cos^4 \varphi - \cos^2 \varphi \sin^2 \varphi\Big], \right), $$$-\pi/3 \le \varphi \le \pi/3$; see the figure below.
$$
\left( \frac{3\sqrt{3}}{8} +
\frac{2}{3}\,\cos\theta\,\big(1 + \sin^3 \theta \big),
\ \ \pm\Big[ -\frac{1}{8}+
\frac{2}{3}\,\sin\theta\,\big(1 + \sin^3 \theta \big)
-\sin^2 \theta\, \Big] \right),
$$
$\pi/6 \le \theta \le 5\pi/6$; see Fig.~\ref{fig:Shapes}\,(ii).
\vspace{2mm}
%\rput(7.33,-0.8){\scalebox{2.98}{		\pscurve[linewidth=0.4pt](0.6495,0)(0.621,0.02845)(0.417,0.1383)(0,0.208)(-0.417,0.1383)(-0.621,0.02845)(-0.6495,0)\pscurve[linewidth=0.4pt](0.6495,0)(0.621,-0.02845)(0.417,-0.1383)(0,-0.208)(-0.417,-0.1383)(-0.621,-0.02845)(-0.6495,0)
	
		%\pscurve[linewidth=0.4pt](0,0.6495)(-0.02845,0.621)(-0.1383,0.417)(-0.208,0)(-0.1383,-0.417)(-0.02845,-0.621)(0,-0.6495)}}
%\rput(7.45,-0.8){\scalebox{1}{ \pscurve[linewidth=0.6pt,linecolor=blue] (1.94856,0)(1.94467,0.005426)(1.91421,0.03921)(1.6495,0.2321)(0,0.625)(-1.6495,0.2321)(-1.91421,0.03921)(-1.94467,0.005426)(-1.94856,0)
%\pscurve[linewidth=0.6pt] (1.94856,0)(1.94467,-0.005426)(1.91421,-0.03921)(1.6495,-0.2321)(0,-0.625)(-1.6495,-0.2321)(-1.91421,-0.03921)(-1.94467,-0.005426)(-1.94856,0)}}

(iii) $a = \pi/2$.
$$
Q(\xi,\eta)\ =\ \frac{2}{3}\, \sqrt{\xi^2 + \eta^2}\, +\, \eta\, -\, \frac{1}{3}\  \frac{\eta^3}{\xi^2 + \eta^2}.
$$
The optimal body is bounded by the curve
%$$\big( 2\cos\varphi + 2\cos\varphi \sin^3 \varphi, \ 2\sin\varphi + 3 - 3\sin^2 \varphi \cos^2 \varphi - \sin^4 \varphi \big), \quad 0 \le \varphi \le 2\pi;$$
%$$\Big( \frac{2}{3}\,\sin\varphi(1+ \sin^3 \varphi) + \cos^2\varphi, \, \ \frac{2}{3}\,\cos\varphi(1 + \sin^3 \varphi) \Big), \quad 0 \le \varphi \le 2\pi;$$ see the figure below.
$$
\left(\frac{2}{3}\,\cos\theta\,\big(1 + \cos^3\theta \big)
+\sin^2\theta, \ \
\frac{2}{3}\,\sin\theta\,\big(1 + \cos^3\theta \big)\right), \quad 0 \le \theta \le 2\pi;
$$
see Fig.~\ref{fig:Shapes}\,(iii).

Note that this body is a shape of constant width\footnote{This is an observation by Valentin Shehtman.}. This is a direct consequence of the fact that the function $\PPP(\theta)= \PPP_{\pi/2}(\theta)\ =\ \frac{2}{3}+ \cos\theta- \frac{1}{3} \cos^3\theta$ satisfies $\PPP(\theta)+\PPP(\theta+\pi)=\const.$
\vspace{2mm}
%\begin{figure}[h]\begin{picture}(0,120)\rput(5.8,2.4){\pscurve(3,2)(3.375,1.949)(3.607,1.65)(4,0)(3.607,-1.65)(3.375,-1.949)(3,-2)(1.375,-1.516)(0.143,-0.35)(0,0)(0.143,0.35)(1.375,1.516)(3,2)}\end{picture}\end{figure}

(iv) $a = 2\pi/3$.
$$ Q(\xi,\eta)\ =\
\left\{
\begin{array}{ll}
	4/3\, \sqrt{\xi^2 + \eta^2}, & \text{if } \ \eta \ge \sqrt{3}\, |\xi|\,;\\
	\frac{2}{3}\, \sqrt{\xi^2 + \eta^2} + \frac{1}{3}\, \frac{|\xi|^3}{\xi^2 + \eta^2} + \frac{3\sqrt{3}\, \eta + |\xi|}{8}, & \text{if } \ -\sqrt{3}\, |\xi| \le \eta \le \sqrt{3}\, |\xi|\,;\\
	4/3\, \sqrt{\xi^2 + \eta^2} + \frac{3\sqrt{3}}{4}\, \eta, & \text{if } \ \eta \le -\sqrt{3}\, |\xi|\,.
\end{array}
\right.
$$
The optimal body
%polar set of the set $\{ Q \le 1 \}$
is bounded by the union of two curves
%$$\left( \frac{3\sqrt{3}}{8}+\frac{2}{3}\,\sin\varphi - \frac{2}{3}\, \cos^3 \varphi \sin\varphi, \ \ \pm\Big[ \frac{1}{8} +\frac{2}{3}\, \cos\varphi + \frac{1}{3}\, \cos^4 \varphi + \cos^2 \varphi \sin^2 \varphi\Big] \right)$$
$$
\left( \frac{3\sqrt{3}}{8}+
\frac{2}{3}\,\cos\theta\,\big( 1-\sin^3\theta\big), \ \
\pm\Big[ \frac{1}{8} +\frac{2}{3}\,\sin\theta\,
\big( 1-\sin^3\theta\big)+ \sin^2\theta \Big]\right),
$$
$\pi/6 \le \theta \le 5\pi/6$ and by two arcs of circumferences of radius $4/3$; the first one with the center at $(0,0)$ contained in the cone $x \le \sqrt 3\,|y|$, and the second one with the center at $(3\sqrt{3}/4,0)$ contained in the cone $x \le 3\sqrt{3}/4-\sqrt 3\,|y|$. See Fig.~\ref{fig:Shapes}\,(iv).
\vspace{2mm}

(v) $a = \pi$.
$$
Q(\xi,\eta)\ =\ \frac{4}{3}\, \sqrt{\xi^2 + \eta^2}.
$$
In this case the optimal body is a circle; see Fig.~\ref{fig:Shapes}\,(v).
%\rput(5,0.15){\pscircle(0,0){1.5}}

\begin{figure}[ht]
	\begin{center}
		\includegraphics[width=0.5\textwidth]{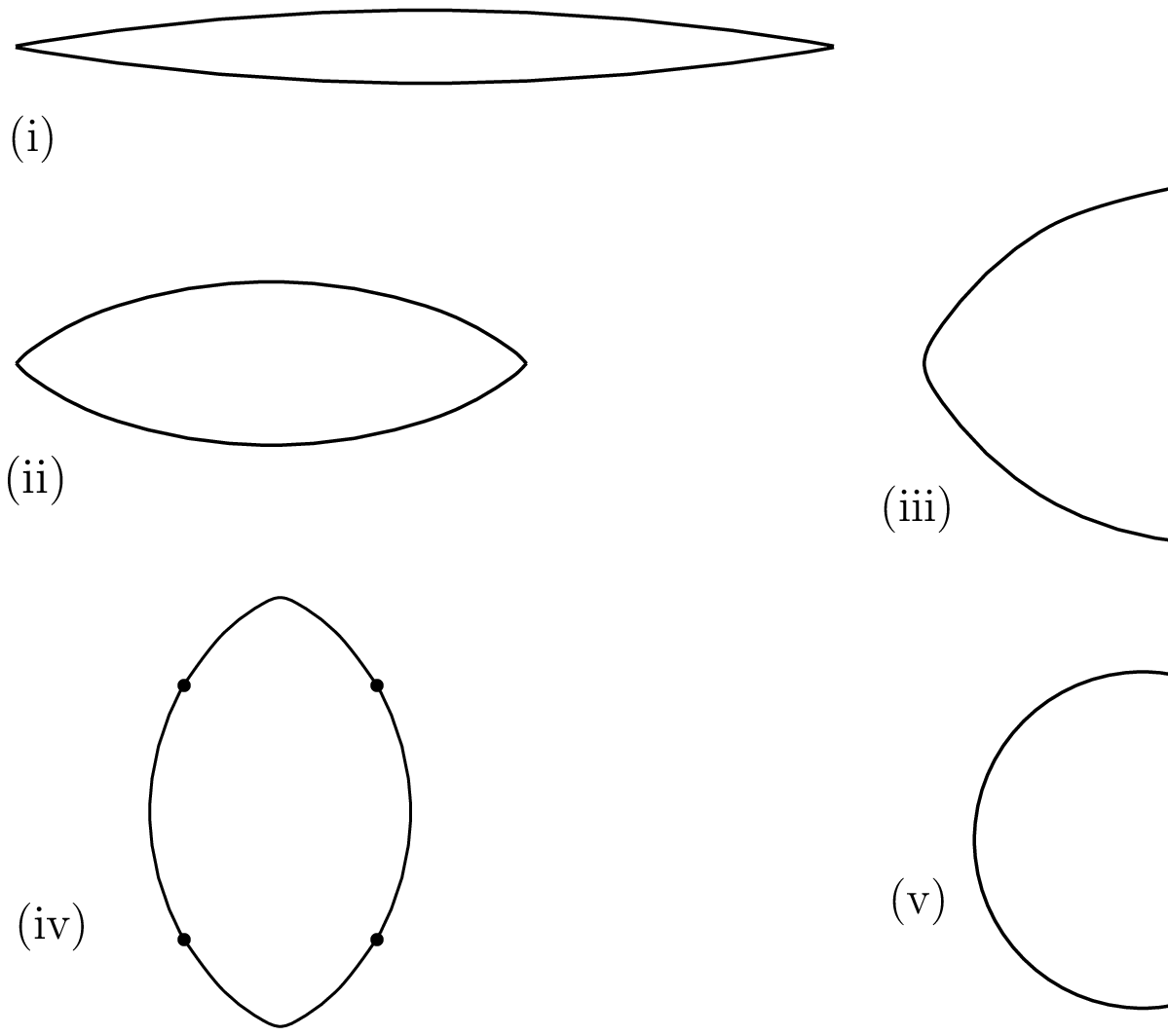}
\caption{The optimal shapes in the case of uniform rotation for (i) $a \ll 1$; (ii) $a=\pi/3$; (iii) $a=\pi/2$; (iv) $a=2\pi/3$; (v) $a=\pi$.}
\label{fig:Shapes}
	\end{center}
\end{figure}

\section{Conclusion}

Here we consider planar convex bodies moving in a rarefied medium on the plane and making slow oscillations. The particles of the medium are originally at rest and reflect elastically when colliding with the body. We are looking for a body, with a prescribed area, that has minimal average resistance.
We have seen that the optimal shape is entirely determined by the density function $f$ of the distribution of rotation angles, while the value of area is responsible merely for the size rather than shape of the optimal body.

The algorithm of solving the problem is the following. Knowing $f$, we then calculate the function $\PPP$, which determines the time-averaged projection of the force induced by the pressure on the body's boundary on the direction of motion. Then we determine the planar set $\Omega$, which is defined by $r \PPP(\theta) \le 1$ in the polar coordinates $r,\, \theta$. Next we take the convex hull $\tilde\Omega = \rm{conv}\,\Omega$ and find the polar set $\Omega^{\circ}$ to $\tilde\Omega$. The optimal body is similar to the set $\Omega^{\circ}$ with the coefficient of similarity being proportional to the square root of the area. The solution scheme can be briefly summarized as follows:
$$
f\, \ \mapsto\, \ \PPP\, \ \mapsto\, \ \Omega\ \, \mapsto\ \, \text{optimal } B \approx
(\conv\,\Omega)^\circ.
$$

Basically, this problem is a generalization of the isoperimetric problem for Finsler metrics on the plane.

Further, we prove that under mild assumptions on $f$, an optimal body can have singularities only at the front and back points of its boundary, and derive necessary and sufficient conditions for each of these singularities. Roughly speaking, a singularity takes place when the oscillation is, in a sense, small, and the smallness is understood differently in the top and in the bottom. The rest of the body's boundary is regular.

We apply the obtained results to the particular case of uniform oscillations with amplitude $a$, that is, when $f = \chi_{[-a,a]}$, $0<a\le\pi$.
It is interesting to follow the behavior of singularities at the front and rear points of the optimal body as $a$ changes from 0 to $\pi$. For $0<a\le\pi/3$ there are both singularities, for $\pi/3<a<\pi/2$ only the rear singularity survives, and for $\pi/2\le a\le\pi$ the boundary of the body is $C^1$-smooth. The tangent cone at the front point is $<2\pi/3$ for $0<a<\pi/3$ (we suppose that it increases monotonically from 0 to $\pi/3$) and is $2\pi/3$ for $a=\pi/3$, and the singularity abruptly disappears as $a$ increases further. The tangent cone at the rear point is $2a$; that is, it increases monotonically from 0 to $\pi$ as $a$ grows from 0 to $\pi/2.$
%The front singularity of the optimal body appears when $a\le\pi/3$, and the backward singularity appears when $a<\pi/2$.
Several optimal shapes are constructed explicitly.

\section*{Acknowledgements}

The work of AP was partly supported by CIDMA (https://ror.org/05pm2mw36) under the Portuguese Foundation for Science and Technology (FCT, https://ror.org/00snfqn58), Grants UID/04106/2025 (https://doi.org/10.54499/UID/04106/2025) and UID/PRR/04106/2025.


\begin{thebibliography}{99}

\bibitem{AP}
A. Akopyan and A. Plakhov. {\it Minimal resistance of curves under the single impact assumption.} SIAM J. Math. Anal. {\bf 47}, 2754-2769 (2015). %DOI. 10.1137/140993843

\bibitem{Lokut2} Ardentov, A. A., Lokutsievskiy, L. V., Sachkov, Yu. L. Extremals for a series of sub-Finsler problems with 2-dimensional control via convex trigonometry // ESAIM: Control, Optimisation and Calculus of Variations. — 2021. — Vol. 27. — Article 32 (52 pp.).
%DOI: 10.1051/cocv/2021024 (arXiv: 2004.10194).

\bibitem{Aleksandrov}
A.\,D. Aleksandrov. {\it On the theory of mixed volumes of convex bodies. III. Extending two theorems of Minkowski on convex polytopes to arbitrary convex bodies}.\, Mat. Sb. {\bf 3 (45)}:1, 27-46 (in Russian) (1938).
     %%\bibitem{0-resist}A. Aleksenko and A. Plakhov. {\it Bodies of zero resistance and bodies invisible in one direction}. Nonlinearity {\bf 22}, 1247-1258 (2009).

     %\bibitem{Alex}A. D. Alexandrov.\, {\it Selected Works. Part I: Selected Scientific Papers}, { Chapter V, \S 3.}\, Ed. by Yu.~G. Reshetnyak and S.~S. Kutateladze. Gordon and Breach Publishers (1996). %{\it To the theory of mixed volumes for convex bodies. Part I: Extension of certain concepts of the theory of convex bodies}.

      %\bibitem{BelloniKawohl}M. Belloni and B. Kawohl. \textit{A paper of Legendre revisited}. Forum Math. {\bf 9}, 655-668 (1997).

     %\bibitem{BW prescribed volume}M. Belloni and A. Wagner. {\it Newton’s problem of minimal resistance in the class of bodies with prescribed volume}.J. Convex Anal. {\bf 10}, 491–500 (2003).
\bibitem{Berestovskii} Berestovskii, V. N. Geodesics of nonholonomic left-invariant intrinsic metrics on the Heisenberg group and isoperimetric curves on the Minkowski plane // Siberian Mathematical Journal. — 1994. — Vol. 35, no. 1. — P. 1–8

\bibitem{BrFK}
F. Brock, V. Ferone and B. Kawohl.\, \textit{A symmetry problem in the calculus of variations}.\, Calc. Var. {\bf 4}, 593-599 (1996).

\bibitem{BFK}
G. Buttazzo, V. Ferone, B. Kawohl.\,
\textit{Minimum problems over sets of concave functions and related questions}.\, Math. Nachr. {\bf 173}, 71--89 (1995).

\bibitem{BK}
G. Buttazzo, B. Kawohl.\, \textit{On Newton's problem of minimal resistance}.\, Math. Intell. {\bf 15}, 7--12 (1993).

     %\bibitem{BG97} G. Buttazzo, P. Guasoni.\, {\it Shape optimization problems over classes of convex domains}.\, J. Convex Anal. {\bf 4}, No.2, 343-351 (1997).

\bibitem{Busemann} Busemann, H. The Isoperimetric Problem in the Minkowski Plane // American Journal of Mathematics. — 1947. — Vol. 69, no. 4. — P. 763–771.
%DOI: 10.2307/2371807

\bibitem{CL1}
M. Comte, T. Lachand-Robert.\, \textit{Newton's problem of the body of minimal resistance under a single-impact assumption}.\,
Calc. Var. Partial Differ. Equ. {\bf 12}, 173-211 (2001).

\bibitem{CL2}
M. Comte, T. Lachand-Robert.\, \textit{Existence of minimizers for Newton's problem of the body of minimal resistance under a single-impact assumption}.\, J. Anal. Math. {\bf 83}, 313-335 (2001).

\bibitem{CL3}
M. Comte and T. Lachand-Robert. {\it Functions and domains having minimal resistance under a single-impact assumption}.\, SIAM J. Math. Anal. {\bf 34}, 101-120 (2002).

\bibitem{DP}
A. Davydov, A Plakhov. {\it Aerodynamic stabilization of rod motion in a flat rarefied medium}. Proceedings of the Steklov Institute of Mathematics (in press).

  %\bibitem{GK} A. Glutsyuk and Y. Kudryashov. {\it  No planar billiard possesses an open set of quadrilateral trajectories}. J. Modern Dynam. {\bf 6}, 287-326 (2012).
  % doi: 10.3934/jmd.2012.6.287

 %\bibitem{Iv} V. Ya. Ivrii. {\it The second term of the spectral asymptotics for a Laplace-Beltrami operator on manifolds with boundary}. Funct. Anal. Appl. {\bf 14}, 98–106 (1980)

\bibitem{Guasoni}
P. Guasoni. {\it Problemi di ottimizzazione di forma su classi di insiemi convessi.} Tesi di Laurea. Università degli Studi di Pisa (in Italian) (1995).


\bibitem{Kaw}
B. Kawohl.  {\it Some nonconvex shape optimization problems}. In: Cellina, A., Ornelas, A. (eds) Optimal Shape Design. Lecture Notes in Mathematics, vol 1740. Springer, Berlin, Heidelberg (2000).  %https://doi.org/10.1007/BFb0106741

\bibitem{Kryzh}
S. Kryzhevich. {\it Motion of a rough disc in Newtonian aerodynamics.} In: Plakhov, A., Tchemisova, T., Freitas, A. (eds) Optimization in the Natural Sciences. EmC-ONS 2014. Communications in Computer and Information Science, vol 499. Springer. %, Cham. https://doi.org/10.1007/978-3-319-20352-2_1


\bibitem{LO}
T. Lachand-Robert and E. Oudet.\, \textit{Minimizing within convex bodies using a convex hull method}.\, SIAM J. Optim. {\bf 16}, 368-379 (2006).

\bibitem{LP1}
T. Lachand-Robert, M.~A. Peletier.\,
\textit{Newton's problem of the body of minimal resistance in the class of convex developable functions}.\, Math. Nachr. {\bf 226}, 153-176 (2001).

       %\bibitem{LP2} T. Lachand-Robert, M.~A. Peletier.\, \textit{An example of non-convex minimization and  an application to Newton's problem of the body of least resistance}.\, Ann. Inst. H. Poincar\'e, Anal. Non Lin. {\bf 18}, 179-198 (2001).

\bibitem{Lokut1}  L. V. Lokutsievskiy, {\it Convex trigonometry with applications to sub-Finsler geometry}. Sb. Math., 210:8, 1179–1205 (2019).
%DOI: 10.1016/S0031-3203(02)00090-0

\bibitem{LZ}
L.\,V. Lokutsievskiy and M.\,I. Zelikin. {\it
The analytical solution of Newton’s aerodynamic problem in the class of bodies with vertical plane of symmetry and developable side boundary.}
ESAIM: COCV {\bf 26}, 15, 36 pages (2020).

\bibitem{LWZ}
L. Lokutsievskiy, G. Wachsmuth, and M. Zelikin. {\it
Non-optimality of conical parts for Newton’s problem of minimal resistance in the class of convex bodies and the limiting case of infinite height.} Calc. Var. {\bf 61}, 31 (2022).

\bibitem{N}
I. Newton. {\it Philosophiae naturalis principia mathematica}. (London: Streater) 1687.

\bibitem{DAN2003}
A Plakhov. {\it Newton's problem of a body of minimal aerodynamic resistance}. Doklady Math. {\bf 390}, 314-317 (2003).

\bibitem{matsb2004}
A Plakhov. {\it Newton's problem of the body of minimum mean resistance}. Sbornik: Math. {\bf 195}, 1017-1037 (2004).
%\\ doi: 10.1070/SM2004v195n07ABEH000836

\bibitem{ARMA}
A. Plakhov. {\it Billiards and two-dimensional problems of optimal resistance}. Arch. Ration. Mech. Anal. {\bf 194}, 349-382 (2009). %DOI

\bibitem{rough2D}
A. Plakhov. {\it Billiard scattering on rough sets: Two-dimensional case}. SIAM J. Math. Anal. {\bf 40}, 2155-2178 (2009). %DOI. 10.1137/070709700.

\bibitem{RMS2009}
A Plakhov. {\it Scattering in billiards and problems of Newtonian aerodynamics}. Russ. Math. Surv. {\bf 64}, 873-938 (2009).
%\\ doi: 10.1070/RM2009v064n05ABEH004642

 %\bibitem{optimal roughening} A Plakhov. {\it Optimal roughening of convex bodies}. Canad. J. Math. {\bf 64}, 1058-1074 (2012). %doi: 10.4153/CJM-2011-070-9

\bibitem{bookP}
A. Plakhov. {\it Exterior billiards. Systems with impacts outside bounded domains}. Springer, New York, 2012. xiv+284 pp. ISBN: 978-1-4614-4480-0

\bibitem{SIC}
A. Plakhov. {\it Newton’s problem of minimal resistance under the single-impact assumption}. Nonlinearity {\bf 29}, 465-488 (2016).

\bibitem{SIREV}
A. Plakhov. {\it Problems of minimal resistance and the Kakeya problem.} SIAM Review {\bf 57}, 421-434 (2015). %DOI. 10.1137/15M1012931

%\bibitem{invisNpoints}  A. Plakhov. {\it Plane sets invisible in finitely many directions}. Nonlinearity {\bf 31}, 3914-3938 (2018). %https://doi.org/10.1088/1361-6544/aac63b

\bibitem{MMO}
A. Plakhov. {\it On generalized Newton’s aerodynamic problem}. Trans. Moscow Math. Soc. {\bf 82}, 217-226 (2021).

\bibitem{P-sing}
A. Plakhov. {\it A solution to Newton's least resistance problem is uniquely defined by its singular set}. Calc. Var. {\bf 61}, 189 (2022). %https://doi.org/10.1007/s00526-022-02300-w

\bibitem{P-boundary}
A. Plakhov. {\it A note on Newton's problem of minimal resistance for convex bodies}. Calc. Var. {\bf 59},167 (2020).
%DOI: https://doi.org/10.1007/s00526-020-01833-2

\bibitem{sbmat2024}
A. Plakhov. {\it Local structure of convex surfaces}. Sbornik Math {\bf 215}, 401-437 (2024).
%DOI: https://doi.org/10.4213/sm9921e  Russian version: pages  119-158 (2024).

  %\bibitem{ineq} A. Plakhov and V. Protasov.  {\it Local minima in Newton’s aerodynamic problem and inequalities between norms of partial derivatives}. J. Math. Anal. Appl. {\bf 543}, 128942 (2025).
%https://doi.org/10.1016/j.jmaa.2024.128942

%\bibitem{invisibility} A Plakhov and V Roshchina. {\it Invisibility in billiards}. Nonlinearity {\bf 24}, 847-854 (2011).

%\bibitem{3dir} A Plakhov and V Roshchina. {\it Fractal bodies invisible in 2 and 3 directions}. Discr. Contin. Dynam. Syst.-A {\bf 33}, 1615-1631 (2013). %doi:10.3934/dcds.2013.33.1615

  %\bibitem{invis2points} A Plakhov and V Roshchina. {\it Bodies with mirror surface invisible from two points.} Nonlinearity {\bf 27}, 1193-1203 (2014).
% doi:10.1088/0951-7715/27/6/1193

  %\bibitem{InftyDir}   A. Plakhov and V. Roshchina. {\it Invisibility in billiards is impossible in an infinite number of directions}. J. Dynam. Control Syst. {\bf 25}, 671-679, 2019.
  %http://link.springer.com/article/10.1007/s10883-019-09443-8

\bibitem{camo25}
A. Plakhov and V. Roshchina. {\it The problem of optimal camouflaging}. SIAM J. Math. Anal. {\bf 57}, 95-117 (2025). %DOI. 10.1137/24M1636976

\bibitem{Magnus}
A. Plakhov and T. Tchemisova. {\it Force acting on a spinning rough disk in a flow of non-interacting particles}.
Doklady Math. {\bf 79}, 132-135 (2009).

\bibitem{JDE}
A. Plakhov and T. Tchemisova. {\it Problems of optimal transportation on the circle and their mechanical applications}. J. Diff. Eqs. {\bf 262}, 2449-2492 (2017). %http://dx.doi.org/10.1016/j.jde.2016.10.049

\bibitem{PTG}
A. Plakhov, T. Tchemisova and P. Gouveia. {\it Spinning rough disk moving in a rarefied medium}. Proc. R. Soc. A. {\bf 466}, 2033-2055 (2010).

\bibitem{RenardyRogers} M. Renardy and R.C. Rogers. {\it An introduction to partial differential equations}. Texts in Applied Mathematics 13, Second ed. New York: Springer-Verlag ((2004).
  %\bibitem{temp} A. Plakhov and D. Torres.\, {\it Newton's aerodynamic problem in media of chaotically moving particles}. Sbornik: Math. {\bf 196}, 885-933 (2005).

 %\bibitem{Nonlinearity} A. Plakhov and P. Gouveia. {\it Problems of maximal mean resistance on the plane}. Nonlinearity {\bf 20}, 2271-2287 (2007).

      %\bibitem{Pogorelov}A.\,V. Pogorelov. {\it Extrinsic geometry of convex surfaces.} Providence, R.I.: American Mathematical Society (AMS). (1973).

 %\bibitem{Sto} L. Stojanov. {\it Note on the periodic points of the billiard}. J. Diff. Geom. {\bf 34}, 835–837 (1991).

 \bibitem{S}
 R. Schneider. {\it Convex bodies: the Brunn–Minkowski theory}. Encyclopedia Math. Appl., {\bf 44}, Cambridge Univ. Press, Cambridge, 1993, xiv+490 pp.

\bibitem{Tih}
 V.\,M. Tikhomirov. {\it Stories about maxima and minima}. Mathematical World, 1. AMS, Providence, RI, 1990.
 %[Translated from:  V.\,M. Tikhomirov. {\it Rasskazy o maksimumakh i minimumakh}. (Russian) ``Nauka'', Moscow, 1986.]

\bibitem{W}
G. Wachsmuth.\, {\it The numerical solution of Newton’s problem of least resistance}.\, Math. Program. A {\bf 147}, 331-350 (2014).
% DOI 10.1007/s10107-014-0756-2


\end{thebibliography}
\end{document}